\newcommand{\m}{\mathrm{m}}
\newcommand{\M}{\mathrm{M}}
\DeclareMathOperator{\ex}{ex}
\newtheorem{definition}{Definition}[section]
\newtheorem{lemma}[definition]{Lemma}
\newtheorem{corollary}[definition]{Corollary}
\newtheorem{example}[definition]{Example}
\newtheorem{theorem}[definition]{Theorem}
\newtheorem{proposition}[definition]{Proposition}
\newtheorem{conjecture}[definition]{Conjecture}
\begin{document}

\begin{frontmatter}



\title{Pattern Forcing (0,1)-Matrices} 


\author{Lei Cao\textsuperscript{a}, Shen-Fu Tsai\textsuperscript{b}} 

\affiliation{organization={Department of Mathematics, Halmos College of Arts and Sciences, Nova Southeastern University},
            state={FL},
            postcode={33314},
            country={USA}}

\affiliation{organization={Department of Mathematics, College of Science, National Central University},
            city={Taoyuan},
            postcode={320317},
            country={Taiwan}}

\begin{abstract}
We introduce two related notions of \emph{pattern enforcement} in $(0,1)$-matrices—\emph{$Q$-forcing} and \emph{strongly $Q$-forcing}—which formalize distinct ways a fixed pattern $Q$ must appear within a larger matrix. A matrix is $Q$-forcing if every submatrix can realize $Q$ after turning any number of $1$-entries into $0$-entries, and strongly $Q$-forcing if every $1$-entry belongs to a copy of $Q$. 

For $Q$-forcing matrices, we establish the existence and uniqueness of extremal constructions minimizing the number of $1$-entries, characterize them using Young diagrams and corner functions, and derive explicit formulas and monotonicity results. For strongly $Q$-forcing matrices, we show that the minimum possible number of $0$-entries of an $m\times n$ strongly $Q$-forcing matrix is always $O(m+n)$, determine the maximum possible number of $1$-entries of an $n\times n$ strongly $P$-forcing matrix for every $2\times2$ and $3\times3$ permutation matrix, and identify symmetry classes with identical extremal behavior. 

We further propose a conjectural formula for the maximum possible number of $1$-entries of an $n\times n$ strongly $I_k$-forcing matrix, supported by results for $k=2,3$. These findings reveal contrasting extremal structures between forcing and strongly forcing, extending the combinatorial understanding of pattern embedding in $(0,1)$-matrices.
\end{abstract}



\begin{keyword}
$(0,1)$-matrices; $Q$-forcing; Strong $Q$-forcing.



  \MSC[2020] 05D99

\end{keyword}

\end{frontmatter}



\section{Introduction}
A $(0,1)$-matrix is a matrix in which each entry is either $0$ or $1$. All matrices discussed in this paper are $(0,1)$-matrices. There have been various extremal theories concerning pattern avoidance and pattern forcing in $(0,1)$-matrices. 

For pattern avoidance, a major line of research focuses on $\ex(n,P)$, the maximum possible number of $1$-entries that an $n\times n$ matrix can have without containing a given $(0,1)$-matrix $P$. A matrix $A$ is said to contain a matrix $B$ if $B$ can be obtained from $A$ by deleting any number of rows and columns followed by turning any number of $1$-entries into $0$-entries. 

This line of study originated from geometric and computational problems. Mitchell proposed an algorithm to find the shortest $L_1$ path between two points in a rectilinear grid with obstacles and bounded its time complexity by $\ex(n,P)$ for a certain pattern $P$~\cite{mitchell1992}, where $\ex(n,P)$ was first obtained by Bienstock and Gy\H{o}ri~\cite{BG1991}. Erd\H{o}s and Moser posed the question of finding the maximum number of unit distances among vertices of a convex $n$-gon~\cite{EM1959}, and F\"uredi gave the first upper bound $O(n\log_2 n)$, which improves upon $n^{1+\epsilon}$, using $\ex(n,P)$ with a suitable pattern $P$~\cite{furedi1990maximum}. 

Perhaps the most renowned application is the resolution of the Stanley--Wilf conjecture in enumerative combinatorics, which states that the number of permutations of $[n]$ avoiding a fixed permutation of $[k]$ is $O(c^n)$ for some $c>0$~\cite{SW}. Klazar showed that the conjecture holds if $\ex(n,P)$ is linear in $n$ for every permutation matrix $P$~\cite{klazar2000}, that is, if the F\"uredi--Hajnal conjecture~\cite{FH1992} is true. Marcus and Tardos later proved the F\"uredi--Hajnal conjecture, thereby settling the Stanley--Wilf conjecture~\cite{MT2003}. 

Other related works include the determination of asymptotic behaviors of $\ex(n,P)$ for small patterns $P$~\cite{FH1992,Tardos2005,Keszegh2009,Pettie2011a}, its connection with generalized Davenport--Schinzel sequences~\cite{Pettie2011a,Keszegh2009}, and generalizations to multidimensional matrices~\cite{KM2007,GT2017,Geneson2019,GT2025,Geneson2021b}. 

While pattern avoidance has been extensively studied, pattern forcing has received far less attention until recently, when Brualdi and Cao initiated the study of square $(0,1)$-matrices in which every permutation matrix contained covers certain subpatterns~\cite{BC2023}. However, the broader notion of $(0,1)$-matrices containing a prescribed submatrix has not yet been explored. Indeed, there has been no formal definition of pattern-forcing $(0,1)$-matrices. 

In this work, we introduce and analyze two related notions---\emph{forcing} and \emph{strongly forcing}---for embedding a fixed $(0,1)$-pattern $Q$ within a larger ambient matrix. Our goal is to investigate the extremal properties of matrices that guarantee the appearance of such a pattern.

\begin{definition}
Let $A$ be an $m \times n$ $(0,1)$-matrix and $Q$ an $s \times t$ $(0,1)$-matrix with $m\ge s,n\ge t$.  
We say that $A$ is \emph{$Q$-forcing} if every $s \times t$ submatrix of $A$ can be transformed into $Q$ by turning any number of its $1$-entries to $0$-entries.
\end{definition}

\begin{definition}
Let $A$ be an $m \times n$ $(0,1)$-matrix and $Q$ an $s \times t$ $(0,1)$-matrix with $m\ge s,n\ge t$.  
We say that $A$ is \emph{strongly $Q$-forcing} if for every $1$-entry $o$ of $A$, $A$ has an $s \times t$ submatrix \emph{exactly equal} to $Q$ that contains $o$.
\end{definition}

Intuitively, forcing requires that every submatrix \emph{could} realize $Q$, while strongly forcing requires that every $1$-entry in $A$ is \emph{witnessed} inside a copy of $Q$. These two perspectives highlight different ways patterns propagate within larger matrices.

\medskip
For a fixed $s \times t$ $(0,1)$-matrix $Q$, 
we focus on two extremal functions:
\begin{enumerate}
\item the \emph{minimum} number $\mathrm{m}(m,n,Q)$ of $1$-entries in an $m \times n$ $(0,1)$-matrix that is $Q$-forcing;
\item the \emph{maximum} number $\mathrm{M}(m,n,Q)$ of $1$-entries in an $m \times n$ $(0,1)$-matrix that is strongly $Q$-forcing.
\end{enumerate}

When $m=n$,
we use the abbreviated notations $\m(n,Q)$ and $\M(n,Q)$ instead.
If the pattern $Q$ is all-one,  all-zero, or has the same dimensions as the ambient matrix, then the extremal functions can be evaluated directly:
\begin{itemize}
\item If $Q$ is an all-one matrix, the unique $Q$-forcing and strongly $Q$-forcing $m\times n$ matrices are both all-one, giving
\[
    \mathrm{m}(m,n,Q) = \mathrm{M}(m,n,Q) = mn.
\]
\item If $Q$ is an all-zero matrix, the unique $Q$-forcing and strongly $Q$-forcing $m\times n$ matrices are both all-zero, so
\[
    \mathrm{m}(m,n,Q) = \mathrm{M}(m,n,Q) = 0.
\]
\item If $Q$ is $m\times n$, then
\[
    \mathrm{m}(m,n,Q) = \mathrm{M}(m,n,Q) = |Q|,
\]
where $|Q|$ denotes the number of $1$-entries in $Q$.
\end{itemize}

Throughout the paper, we do not consider any all-zero pattern unless stated otherwise. The remainder of the paper is organized as follows. In Section~2 we study $Q$-forcing matrices in full generality, emphasizing the rectangular case $\m(m,n,Q)$. We prove the existence and uniqueness of extremal constructions and express $\m(m,n,Q)$ in terms of combinatorial objects such as Young diagrams, leading to exact formulas and monotonicity results. Section~3 turns to strongly $Q$-forcing matrices, where the extremal problem is quite different: here we show that $\M(m,n,Q)$ is always within $\O(m+n)$ of the trivial
upper bound $mn$, and we determine exact values for small patterns, notably all $2\times 2$ and $3\times 3$ permutation matrices. For larger $k$, we provide constructions, identify dihedral symmetry classes of permutation matrices with equal extremal behavior, and propose a conjectural formula for $\M(n,I_k)$. Finally, Section~4 summarizes our findings, highlights the contrasting extremal behaviors of forcing versus strong forcing, and discusses directions for future investigation.

\section{$Q$-Forcing} \label{sec:Qforcing}

In this section, we present general results concerning $Q$-forcing $(0,1)$-matrices.
We begin with the unique $Q$-forcing matrix with the minimum number of $1$-entries.

\begin{lemma}\label{lemma:unique}
Let $Q$ be an $s \times t$ $(0,1)$-matrix, and let $m \geq s$, $n \geq t$.
There exists a unique $m \times n$ $Q$-forcing matrix $A$ with the minimum number of $1$-entries.
Moreover, $A$ can be constructed by setting to $1$ every entry that is required to be a $1$-entry in each $s \times t$ submatrix formed by $s$ consecutive rows and $t$ consecutive columns of $A$.
\end{lemma}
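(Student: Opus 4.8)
The plan is to reduce the global definition of $Q$-forcing to a purely local, entrywise condition, and then to exhibit the minimizer explicitly as the matrix of ``forced'' positions.

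\textbf{Step 1: Reduce to entrywise domination of consecutive submatrices.} Because $Q$ has exactly the same size $s\times t$ as the submatrices under consideration and the only permitted operation is turning $1$-entries into $0$-entries, an $s\times t$ submatrix $S$ of $A$ can be transformed into $Q$ if and only if $S\ge Q$ entrywise, i.e.\ $S_{ab}=1$ whenever $Q_{ab}=1$. Hence $A$ is $Q$-forcing iff every $s\times t$ submatrix of $A$ dominates $Q$. I would next show it suffices to test this on submatrices of $s$ \emph{consecutive} rows and $t$ \emph{consecutive} columns: given an arbitrary submatrix with rows $r_1<\dots<r_s$ and columns $c_1<\dots<c_t$ and a position $(a,b)$ with $Q_{ab}=1$, the consecutive submatrix with rows $r_a-a+1,\dots,r_a-a+s$ and columns $c_b-b+1,\dots,c_b-b+t$ lies inside the $m\times n$ frame (the bounds $a\le r_a\le m-s+a$ follow from the monotonicity of $r_1,\dots,r_s$, and likewise for the columns) and carries the entry $A_{r_a,c_b}$ in its relative position $(a,b)$; so if every consecutive submatrix dominates $Q$, then $A_{r_a,c_b}=1$. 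Thus the two conditions are equivalent.

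\textbf{Steps 2--4: The extremal matrix.} Call a position $(i,j)$ \emph{forced} if some consecutive $s\times t$ submatrix, say with top-left corner $(p,q)$, contains $(i,j)$ with $Q_{\,i-p+1,\,j-q+1}=1$, and let $A^{\ast}$ be the $m\times n$ matrix whose $1$-entries are exactly the forced positions; this is precisely the construction in the statement. First, $A^{\ast}$ is $Q$-forcing: for any consecutive submatrix with corner $(p,q)$ and any $(a,b)$ with $Q_{ab}=1$, the position $(p+a-1,q+b-1)$ is forced, witnessed by that submatrix itself, so $A^{\ast}$ dominates $Q$ on every consecutive submatrix, and Step~1 applies. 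Second, if $B$ is any $Q$-forcing matrix and $(i,j)$ is forced via a submatrix with corner $(p,q)$, then the corresponding consecutive submatrix of $B$ dominates $Q$, forcing $B_{ij}=1$; hence $B\ge A^{\ast}$ entrywise. Therefore $B$ has at least as many $1$-entries as $A^{\ast}$, with equality only when $B=A^{\ast}$, which establishes both minimality and uniqueness.

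\textbf{Main obstacle.} The only genuinely delicate point is the reduction in Step~1 from arbitrary submatrices to consecutive ones: one must verify that the ``alignment'' submatrix always stays within the $m\times n$ array, which is a small but essential piece of index bookkeeping. Once the same-size observation turns ``containment'' into ``entrywise domination,'' the remaining arguments are immediate, and indeed this collapse of containment to domination is exactly what makes the $Q$-forcing extremal problem so clean in the square-pattern regime.
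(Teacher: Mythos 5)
Your proof is correct and follows essentially the same route as the paper's: the key step in both is the shift argument replacing an arbitrary submatrix on rows $r_1<\dots<r_s$, columns $c_1<\dots<c_t$ by the consecutive submatrix on rows $r_a-a+1,\dots,r_a-a+s$ and columns $c_b-b+1,\dots,c_b-b+t$, which carries the same entry at the same relative position, and the minimizer is identified as the matrix of forced positions dominated entrywise by every $Q$-forcing matrix. You merely organize it in the opposite order (proving the consecutive reduction first, then defining the extremal matrix), and your explicit index bookkeeping $a\le r_a\le m-s+a$ is a welcome detail the paper leaves implicit.
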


\begin{proof}
We construct a $(0,1)$-matrix $A'$ as follows.
Start with an $m \times n$ all-zero matrix. For every $s \times t$ submatrix of $A'$, set to $1$ each entry corresponding to a $1$-entry in $Q$ if it has not been set to $1$.
By definition of $Q$-forcing, any $m \times n$ $Q$-forcing matrix $B$ must satisfy $B \geq A'$ entrywise.
Hence $A'$ is the unique $m \times n$ $Q$-forcing matrix with the minimum number of $1$’s.

It remains to show that the matrix $A$ produced by the algorithm described in the statement is equal to $A'$. Clearly we have $A'\ge A$ entrywise, so it suffices to show that $A'\le A$ entrywise.

Suppose that $e = A'_{r_y,c_x}$ is set to $1$ while processing a submatrix of $A'$ determined by rows $r_1 < r_2 < \dots < r_s$ and columns $c_1 < c_2 < \dots < c_t$, with $Q_{y,x} = 1$ where $1 \le y \le s$, $1 \le x \le t$.
Then $e$ also lies in the submatrix consisting of rows
\[
    r_y - y + 1, \; r_y - y + 2, \; \dots, \; r_y - y + s
\]
and columns
\[
    c_x - x + 1, \; c_x - x + 2, \; \dots, \; c_x - x + t,
\]
where again the $(y,x)$-entry corresponds to $Q_{y,x} = 1$.
Thus $A_{r_y,c_x}$, the counterpart of $A'_{r_y,c_x}$, is also set to $1$ by the algorithm in the statement.
\end{proof}

Lemma~\ref{lemma:unique} implies that $\mathrm{m}(m,n,Q)$ is monotonic with respect to entrywise inequalities between matrices.  

\begin{corollary}
Suppose that $m\ge s,n\ge t$.
Let \(P\) and \(Q\) be $s\times t$ matrices with \(P \leq Q\) entrywise. If $A_P$ and $A_Q$ are the $m\times n$ $(0,1)$-matrices that is $P$-forcing and $Q$-forcing with minimum number of $1$-entries, respectively. Then $A_P\le A_Q$ entrywise and thus
\[
    \mathrm{m}(m,n,P) \leq \mathrm{m}(m,n,Q).
\]
\end{corollary}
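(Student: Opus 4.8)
The plan is to leverage Lemma~\ref{lemma:unique}, which tells us that both $A_P$ and $A_Q$ can be constructed by the same local procedure: start from an all-zero $m\times n$ matrix and, for every placement of an $s\times t$ window (i.e.\ $s$ consecutive rows and $t$ consecutive columns), set to $1$ exactly those entries of the window that correspond to $1$-entries of the pattern. Since $P\le Q$ entrywise, every $1$-entry of $P$ is a $1$-entry of $Q$ in the same position, so each window placement forces a subset of the entries for $P$ that it forces for $Q$.

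The key steps, in order, are as follows. First, invoke Lemma~\ref{lemma:unique} to guarantee that $A_P$ and $A_Q$ exist, are unique, and are produced by the stated algorithm. Second, fix an arbitrary entry position $(i,j)$ and suppose $(A_P)_{i,j}=1$; by the construction of $A_P$, there is some window (rows $r_1<\dots<r_s$, columns $c_1<\dots<c_t$ containing row $i$ and column $j$) and indices $y,x$ with $(i,j)$ the $(y,x)$-cell of that window and $P_{y,x}=1$. Third, since $P\le Q$ entrywise we have $Q_{y,x}=1$ as well, so the same window placement in the construction of $A_Q$ sets the $(i,j)$-entry to $1$, whence $(A_Q)_{i,j}=1$. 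This shows $A_P\le A_Q$ entrywise. Finally, summing entries gives $\mathrm{m}(m,n,P)=|A_P|\le|A_Q|=\mathrm{m}(m,n,Q)$, where $|\cdot|$ counts $1$-entries, since $A_P$ and $A_Q$ are the minimizers.

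There is essentially no serious obstacle here: the statement is an immediate corollary of the explicit local construction in Lemma~\ref{lemma:unique}, and the only thing to be careful about is phrasing the ``same window placement'' argument cleanly — namely that the algorithm's behavior at a given entry depends only on whether the relevant pattern cell is a $1$, which is monotone in the pattern. One could equivalently argue more abstractly: any $Q$-forcing matrix is automatically $P$-forcing when $P\le Q$ (since from any $s\times t$ submatrix one can first reach $Q$ and then zero out further entries to reach $P$), so the minimum-weight $Q$-forcing matrix $A_Q$ is $P$-forcing, forcing $A_P\le A_Q$ by the uniqueness/minimality part of Lemma~\ref{lemma:unique}. I would present this abstract version as the main line of the proof, as it avoids re-deriving the index bookkeeping already done in Lemma~\ref{lemma:unique}.
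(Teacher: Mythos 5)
Your proposal is correct and matches the paper, which offers no separate argument for this corollary beyond noting it is an immediate consequence of Lemma~\ref{lemma:unique}: since $P\le Q$ entrywise, every entry the construction forces to $1$ for $P$ is also forced to $1$ for $Q$, giving $A_P\le A_Q$ entrywise and hence $\mathrm{m}(m,n,P)\le\mathrm{m}(m,n,Q)$. Your alternative abstract phrasing (any $Q$-forcing matrix is $P$-forcing, so $A_Q$ dominates $A_P$ by the minimality/uniqueness in Lemma~\ref{lemma:unique}) is an equally valid and clean way to state the same one-line deduction.
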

However, in general monotonicity may not hold with respect to containment between patterns of different dimensions.
Here is an example.

\begin{example} Let
\[
    Q_1 = 
    \begin{bmatrix}
    1
    \end{bmatrix}, \quad
    Q_2 = 
    \begin{bmatrix}
    1 & 0 \\
    0 & 0
    \end{bmatrix}, \quad
    Q_3 = 
    \begin{bmatrix}
    1 & 1 & 1 & 1 \\ 
    1 & 1 & 0 & 1 \\
    1 & 0 & 0 & 1 \\
    1 & 1 & 1 & 1
    \end{bmatrix}.
\]

Observe that \(Q_2\) contains \(Q_1\) and that \(Q_3\) contains \(Q_2\). Moreover,
\[
\mathrm{m}(m,n,Q_1) = mn > (m-1)(n-1) = \mathrm{m}(m,n,Q_2)
\]
and
\[
\mathrm{m}(m,n,Q_3) > \mathrm{m}(m,n,Q_2),
\]
where \(m \ge 4\) and \(n \ge 4\).

\end{example}

\medskip

Let $Q$ be an $r \times s$ $(0,1)$-matrix.
If $m \geq 2r$ and $n \geq 2s$, we will describe the structure of the unique $m \times n$ $Q$-forcing matrix with $\mathrm{m}(m,n,Q)$ $1$-entries. We say that a position $(i_1,j_1)$ \emph{dominates} a position $(i_2,j_2)$ if $i_1 \ge i_2$ and $j_1 \ge j_2$.
We say that $(i_1,j_1)$ \emph{alt-dominates} $(i_2,j_2)$ if $i_1 \le i_2$ and $j_1 \ge j_2$.
Every position both dominates and alt-dominates itself, and each relation defines a nonstrict partial order.
Let $\mathcal{P}$ be a set of positions and let $p \in \mathcal{P}$.
We call $p$ an \emph{(alt-)minimal} position of $\mathcal{P}$ if it does not (alt-)dominate any other position in $\mathcal{P}$.
Likewise, $p$ is an \emph{(alt-)maximal} position of $\mathcal{P}$ if it is not (alt-)dominated by any other position in $\mathcal{P}$.

\ytableausetup{centertableaux}

To aid understanding, we illustrate our results using Young diagrams, beginning with a brief review of the connection between $(0,1)$-matrices and Young diagrams.
A \textbf{Young diagram} (or Ferrers diagram) is a finite collection of boxes arranged in left-justified rows with row lengths in non-increasing order from top to bottom.
It is often used as a visual representation of a partition of a positive integer.
The \emph{cardinality} of a Young diagram is the total number of boxes it contains.

\begin{example}
Consider the partition
\[
    11 = 4 + 3 + 3 + 1.
\]
Its corresponding Young diagram is
\begin{ytableau}
\none &  &  &  & \\
\none &  &  &  \\
\none &  &  &  \\
\none &
\end{ytableau}
\end{example}

Let $Q$ be an $s \times t$ $(0,1)$-matrix. Define $NW(Q)$ as the largest Young diagram that can be placed in the upper left corner of $Q$ without covering any $1$-entries, and let $|NW(Q)|$ denote its cardinality.
Similarly, define $SW(Q)$ as the largest Young diagram that can be placed in the lower left corner of $Q$ after being flipped vertically.
Define $NE(Q)$ as the largest Young diagram that can be placed in the upper right corner of $Q$ after being flipped horizontally.
Define $SE(Q)$ as the largest Young diagram that can be placed in the lower right corner of $Q$ after being flipped vertically and horizontally. We call $NW(Q)$, $NE(Q)$, $SE(Q)$, and $SW(Q)$ the \emph{corner functions} of $(0,1)$-matrix $Q$.

\medskip
The corner functions can be equivalently defined as follows.
\begin{itemize}
\item $NW(Q)$ is the set of positions of $0$-entries that do not dominate any $1$-entries in $Q$;
\item $SW(Q)$ is the set of positions of $0$-entries that do not alt-dominate any $1$-entries in $Q$;
\item $NE(Q)$ is the set of positions of $0$-entries that are not alt-dominated by any $1$-entries in $Q$;
\item $SE(Q)$ is the set of positions of $0$-entries that are not dominated by any $1$-entries in $Q$.
\end{itemize}

Using the corner functions of $Q$  we characterize an $m\times n$ $(0,1)$-matrix that will be proved equal to the output of the algorithm in Lemma~\ref{lemma:unique}. 

\begin{definition}
For a $s\times t$ $(0,1)$-matrix $Q$ and $m \ge 2s,n \ge 2t$, let $A_{m,n,Q}$ be an $m\times n$ $(0,1)$-matrix satisfying the following. Let $A_{NW},A_{NE},A_{SE},A_{SW}$ be the $s\times t$ upper left, upper right, lower right, and lower left submatrices of $A_{m,n,Q}$, respectively, such that
\begin{enumerate}
\item $NW(A_{NW})=NW(Q)$,
\item $NE(A_{NE})=NE(Q)$,
\item $SE(A_{SE})=SE(Q)$,
\item $SW(A_{SW})=SW(Q)$,
\item $A_{m,n,Q}$ has the same number of consecutive all-zero top rows as $Q$,
\item $A_{m,n,Q}$ has the same number of  consecutive all-zero bottom rows as $Q$,
\item $A_{m,n,Q}$ has the same number of  consecutive all-zero leftmost columns as $Q$,
\item $A_{m,n,Q}$ has the same number of consecutive all-zero rightmost columns as $Q$, and
\item $A_{m,n,Q}$ has no other $0$-entries.
\end{enumerate}
\end{definition}

\begin{example}
Let $Q$ be the $7 \times 6$ $(0,1)$-matrix.
The regions $NW(Q)$, $SW(Q)$, $NE(Q)$, and $SE(Q)$ are highlighted in yellow, red, green, and blue, respectively.
We have
\[
    |NW(Q)| = 7, \quad |SW(Q)| = 4, \quad |NE(Q)| = 8, \quad |SE(Q)| = 2.
\]

\[
    \left[\begin{array}{c|c|c|c|c|c}
    \cellcolor{yellow!20}&\cellcolor{yellow!20}&1&\cellcolor{green!20}&\cellcolor{green!20}&\cellcolor{green!20}\\ \hline
    \cellcolor{yellow!20}&\cellcolor{yellow!20}&&1&\cellcolor{green!20}&\cellcolor{green!20}\\ \hline
    \cellcolor{yellow!20}&1&1&1&\cellcolor{green!20}&\cellcolor{green!20}\\ \hline
    \cellcolor{yellow!20}&&&1&1&\cellcolor{green!20}\\ \hline
    \cellcolor{yellow!20}&1&&&1&1\\ \hline
    1&&1&&&\cellcolor{blue!20}\\ \hline
    \cellcolor{red!20}&\cellcolor{red!20}&\cellcolor{red!20}&\cellcolor{red!20}&1&\cellcolor{blue!20}\\
    \end{array}\right]
\]
and
\[
    A_{14,12,Q} =
    \left[\begin{array}{c|c|c|c|c|c|c|c|c|c|c|c}
    0\cellcolor{yellow!20}&0\cellcolor{yellow!20}&1&1&1&1&1&1&1&0\cellcolor{green!20}&0\cellcolor{green!20}&0\cellcolor{green!20}\\ \hline
    0\cellcolor{yellow!20}&0\cellcolor{yellow!20}&1&1&1&1&1&1&1&1&0\cellcolor{green!20}&0\cellcolor{green!20}\\ \hline
    0\cellcolor{yellow!20}&1&1&1&1&1&1&1&1&1&0\cellcolor{green!20}&0\cellcolor{green!20}\\ \hline
    0\cellcolor{yellow!20}&1&1&1&1&1&1&1&1&1&1&0\cellcolor{green!20}\\ \hline
    0\cellcolor{yellow!20}&1&1&1&1&1&1&1&1&1&1&1\\ \hline
    1&1&1&1&1&1&1&1&1&1&1&1 \\ \hline
    1&1&1&1&1&1&1&1&1&1&1&1 \\ \hline
    1&1&1&1&1&1&1&1&1&1&1&1 \\ \hline
    1&1&1&1&1&1&1&1&1&1&1&1 \\ \hline
    1&1&1&1&1&1&1&1&1&1&1&1 \\ \hline
    1&1&1&1&1&1&1&1&1&1&1&1 \\ \hline
    1&1&1&1&1&1&1&1&1&1&1&1 \\ \hline
    1&1&1&1&1&1&1&1&1&1&1&0\cellcolor{blue!20}\\ \hline
    0\cellcolor{red!20}&0\cellcolor{red!20}&0\cellcolor{red!20}&0\cellcolor{red!20}&1&1&1&1&1&1&1&0\cellcolor{blue!20}\\
    \end{array}\right]
\]
\end{example}

The four corner functions of a $(0,1)$-matrix $Q$ may not be disjoint.
For example, if the top row of $Q$ is all-zero then both $NW(Q)$ and $NE(Q)$ contain all positions of the top row.

\begin{theorem}
Let $Q$ be an $s \times t$ $(0,1)$-matrix, $m \ge 2s$, $n \ge 2t$, and $Q$ is not all-zero. Then the algorithm in Lemma~\ref{lemma:unique} produces $A_{m,n,Q}$.
\end{theorem}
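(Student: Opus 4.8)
The goal is to show that the matrix $A$ output by the greedy algorithm of Lemma~\ref{lemma:unique} is exactly $A_{m,n,Q}$; since Lemma~\ref{lemma:unique} already guarantees $A$ is the unique extremal $Q$-forcing matrix, and the definition of $A_{m,n,Q}$ pins down a matrix by prescribing its nine structural properties, it suffices to verify that $A$ satisfies conditions (1)--(9). First I would record what the algorithm does at the level of individual entries: position $(i,j)$ of $A$ is a $1$-entry if and only if there is some placement of $Q$'s $s\times t$ grid — i.e.\ a choice of $y\in[s]$, $x\in[t]$ and a window of $s$ consecutive rows starting at row $i-y+1$ and $t$ consecutive columns starting at column $j-x+1$, all inside $[m]\times[n]$ — with $Q_{y,x}=1$. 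Equivalently, $(i,j)$ is a $0$-entry of $A$ iff for every such legal placement the corresponding entry of $Q$ is $0$. I would then translate this "no legal placement covers $(i,j)$ with a $1$" condition into statements about the four corners and the zero-border of $Q$, handling the interior, the four corner $s\times t$ blocks, and the border strips separately.

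The heart of the argument is the corner analysis; take the upper-left block $A_{NW}$ (rows $1..s$, columns $1..t$) as the representative case, the other three being obtained by the horizontal/vertical flips already built into the definitions of $NE$, $SE$, $SW$. Fix a position $(y,x)$ with $1\le y\le s$, $1\le x\le t$. Because $m\ge 2s$ and $n\ge 2t$, every window of $s$ consecutive rows and $t$ consecutive columns that contains $(y,x)$ and lies in $[m]\times[n]$ is available — in particular, for any $(y',x')$ with $y'\le y$ and $x'\le x$ (so that $y-y'+1\ge 1$ and the row window $y-y'+1,\dots,y-y'+s$ still fits, and similarly for columns), there is a legal placement in which $(y,x)$ plays the role of the $(y',x')$-entry of $Q$. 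Hence $A_{y,x}=0$ iff $Q_{y',x'}=0$ for all $(y',x')$ with $y'\le y$, $x'\le x$ — but that is precisely the condition that $(y,x)$ does not dominate any $1$-entry of $Q$, i.e.\ $(y,x)\in NW(Q)$ according to the equivalent description of the corner functions given in the excerpt. (One must also check that no placement with $(y',x')$ outside the "north-west of $(y,x)$" cone can be legal and cover $(y,x)$ with a $1$: if $y'>y$ then the required starting row $y-y'+1$ is $\le 0$, so the window falls off the top of the matrix, and symmetrically for columns — this is where staying inside the $s\times t$ corner block, not the whole matrix, is used.) This shows $NW(A_{NW})=NW(Q)$, i.e.\ condition (1), and conditions (2)--(4) follow by the flip symmetries.

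For the border conditions (5)--(8) and the "no other zeros" condition (9): if $Q$ has $a$ consecutive all-zero top rows, then for $1\le i\le a$ no placement of $Q$ into $A$ can cover row $i$ with a $1$ (every legal window has its $Q$-rows $1,\dots,a$ all zero, and rows below that are $\ge a+1$), so rows $1,\dots,a$ of $A$ are all zero; conversely row $a+1$ of $Q$ has a $1$ in some column $x$, and since $n\ge 2t$ we may slide a window so that this $1$ lands anywhere in columns $1,\dots,n-t+1$... and in fact, combined with the column-border analysis, one gets a $1$ somewhere in row $a+1$ of $A$, so $A$ has exactly $a$ all-zero top rows. The same reasoning (with the appropriate flip) handles the bottom rows and the left/right columns. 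Finally, for condition (9), any position $(i,j)$ not in one of the four corner $s\times t$ blocks and not in one of the border strips — so $i$ is strictly between the top-border and bottom-border ranges, or $j$ is strictly interior, in a way that still leaves room — can be covered by a window positioned so that a $1$ of $Q$ (which exists, as $Q$ is not all-zero, and can be routed around the interior using $m\ge 2s$, $n\ge 2t$) lands on $(i,j)$; hence $A_{i,j}=1$. I expect the main obstacle to be the bookkeeping in this last step: precisely delimiting which positions are "interior" versus governed by a corner or a border strip, and checking in the overlap regions (e.g.\ a position lying in both the top-border strip's column range and a side column strip, or in the intersection of two corner blocks when $m$ or $n$ is as small as $2s$ or $2t$) that the various characterizations agree. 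Careful case separation by which of the four $s\times t$ blocks and which border strips a position belongs to, using the equivalent set-theoretic descriptions of the corner functions throughout, should make this routine though somewhat lengthy.
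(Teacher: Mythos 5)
Your proposal is correct and follows essentially the same route as the paper: both arguments characterize the algorithm's output entrywise, showing that in each $s\times t$ corner block a position is left as $0$ exactly when it fails to dominate (or alt-dominate, under the appropriate reflection) a $1$-entry of $Q$ -- i.e.\ exactly the corner-function positions -- while the border strips and the interior are handled directly using $m\ge 2s$, $n\ge 2t$ and the fact that $Q$ is not all-zero. The paper phrases this as two inclusions (no $0$-entry of $A_{m,n,Q}$ is set to $1$; every $1$-entry is set to $1$) rather than as verifying conditions (1)--(9), but the content is the same.
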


\begin{proof}
First, we show that no position of any $0$-entry of $A_{m,n,Q}$ is set to $1$ by the algorithm. If the position of a $0$-entry at $(i,j)$ in the upper left $s\times t$ submatrix of $A_{m,n,Q}$ is set to $1$ by the algorithm, then it corresponds to a $1$-entry of $Q$ when we shift $Q$ down any number of rows and right any number of columns from the upper left corner of the ambient matrix. Therefore  $(i,j)$ dominates a $1$-entry of $Q$, contradicting with the definition of $NW(Q)$. Similarly no position of any $0$-entry in the upper right, lower left, and lower right $s\times t$ submatrices of $A_{m,n,Q}$ is set to $1$ by the algorithm. Also, it is clear that no position of any $0$-entry in the top consecutive all-zero rows of $A_{m,n,Q}$ is set to $1$ by the algorithm, and nor is the position of any $0$-entry in the bottom consecutive all-zero rows, leftmost consecutive all-zero columns, and rightmost consecutive all-zero columns of $A_{m,n,Q}$.

Next, we show that the position of every $1$-entry of $A_{m,n,Q}$ is set to $1$ by the algorithm. For a $1$-entry at $(i,j)$ in the upper left $s\times t$ submatrix of $A_{m,n,Q}$, since $(i,j)\notin NW(Q)$ it dominates a $1$-entry of $Q$. So position $(i,j)$ is set to $1$ by the algorithm when shifting $Q$ down any number of rows and right any number of columns from the upper left corner of the ambient matrix. Similarly the position of every $1$-entry in the upper right, lower left, and lower right $s\times t$ submatrix of $A_{m,n,Q}$ is set to $1$ by the algorithm.
Also, the position of every remaining entry of the ambient matrix is clearly set to $1$ by the algorithm.
\end{proof}

Together with the corner functions of $Q$,
we can then express $\mathrm{m}(m,n,Q)$ in terms of the number of consecutive all-zero top rows, consecutive all-zero bottom rows, consecutive all-zero leftmost columns, and consecutive all-zero rightmost columns. Alternatively, it is conceptually intuitive to strip off all-zero columns and rows at the boundary of $Q$ and treat the remaining as its core. 
Let $Q_{core}$ denote the $s' \times t'$ $(0,1)$-matrix obtained by deleting all-zero rows from the top and bottom and all-zero columns from the left and right until each of the boundary rows and columns contains at least one $1$-entry.
Clearly $s' \le s$ and $t' \le t$.
Note that $Q_{core}$ may contain all-zero rows or columns which were not at the boundary of $Q$.

\begin{example}
\[
    Q=\left[\begin{array}{c|c|c|c|c}
    0 & 0& 0 & 0 & 0\\ \hline
    0 & 0& 0 & 0 & 0 \\ \hline
    0 & 0& 0 & 0 & 0\\ \hline
    1 & 0& 0& 1 & 0\\ \hline
    0 & 0 & 0 & 0& 0\\ \hline
    1& 1& 1 & 0 & 0 \\ \hline
    1&1 & 1& 0 &0
    \end{array}\right],
    \quad
    Q_{core}=\left[\begin{array}{c|c|c|c}
    1 & 0& 0& 1\\ \hline
    0 & 0 & 0 & 0\\ \hline
    1& 1& 1 & 0  \\ \hline
    1&1 & 1& 0
    \end{array}\right]
\]
\end{example}

By counting the $0$-entries of $A_{m,n,Q}$, we have the following result.
\begin{corollary}
Let $Q$ be an $s \times t$ $(0,1)$-matrix, its core $Q_{core}$ be an $s'\times t'$ $(0,1)$-matrix, and let $m \ge 2s$ and $n \ge 2t$. Then
\begin{equation*}
\begin{split}
\m(m,n,Q)=mn&-(m-2s)(t-t')-(n-2t)(s-s')\\
&-|NW(Q)|-|SW(Q)|-|NE(Q)|-|SE(Q)|
\end{split}
\end{equation*}
\end{corollary}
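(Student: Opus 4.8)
The plan is to invoke the preceding theorem together with Lemma~\ref{lemma:unique}: since the algorithm of Lemma~\ref{lemma:unique} produces the unique $m\times n$ $Q$-forcing matrix with the fewest $1$-entries, and that output is $A_{m,n,Q}$, we have $\m(m,n,Q)=|A_{m,n,Q}|=mn-z$, where $z$ is the number of $0$-entries of $A_{m,n,Q}$. So the whole proof reduces to counting $z$.

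First I would partition the ambient $m\times n$ grid into nine pairwise-disjoint regions: the four $s\times t$ corner blocks occupied by $A_{NW},A_{NE},A_{SE},A_{SW}$; the four boundary strips (the part of rows $1,\dots,s$ lying in columns $t+1,\dots,n-t$, and symmetrically the bottom, left, and right strips); and the central block of rows $s+1,\dots,m-s$ and columns $t+1,\dots,n-t$. It is exactly here that the hypotheses $m\ge 2s$ and $n\ge 2t$ are used: they guarantee that the four corner blocks are pairwise disjoint and that these nine regions are both disjoint and exhaustive. Write $a,b$ for the numbers of consecutive all-zero rows at the top and bottom of $Q$ and $c,d$ for the numbers of consecutive all-zero columns at the left and right of $Q$; by the construction of $Q_{core}$ we have $a+b=s-s'$ and $c+d=t-t'$.

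Next I would count the $0$-entries region by region. By condition (9) the central block is all $1$'s. In the corner block of $A_{NW}$, conditions (2)--(4), (6), (8) are irrelevant to this block (their blocks are disjoint from it, using $m\ge 2s$ and $n\ge 2t$), while conditions (5) and (7) contribute only entries in the first $a$ rows or first $c$ columns of the block; using the equivalent characterization of $NW$, every such entry already lies in $NW(Q)$, so by conditions (1) and (9) the $0$-entries of $A_{NW}$ are precisely $NW(Q)$, contributing $|NW(Q)|$, and symmetrically the other three corners contribute $|NE(Q)|,|SE(Q)|,|SW(Q)|$. In the top boundary strip, condition (9) leaves only the $a$ all-zero top rows of $A_{m,n,Q}$, each of width $n-2t$, giving $a(n-2t)$; the bottom strip gives $b(n-2t)$, and the left and right strips give $c(m-2s)$ and $d(m-2s)$. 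Summing,
\[
z=|NW(Q)|+|NE(Q)|+|SE(Q)|+|SW(Q)|+(a+b)(n-2t)+(c+d)(m-2s),
\]
and substituting $a+b=s-s'$, $c+d=t-t'$ and then $\m(m,n,Q)=mn-z$ yields the stated identity. The only real care needed is the overlap bookkeeping of the previous paragraph — verifying that the nine regions partition the grid and that no $0$-entry of a corner block is counted again among the boundary-strip zeros (equivalently, that the all-zero boundary rows and columns of $A_{m,n,Q}$, when restricted to a corner block, are absorbed into the corresponding corner function); once that is in place the remaining arithmetic is routine.
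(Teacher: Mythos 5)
Your proof is correct and follows exactly the route the paper intends: the paper derives this corollary simply "by counting the $0$-entries of $A_{m,n,Q}$" (via Lemma~\ref{lemma:unique} and the preceding theorem), which is precisely your region-by-region count, carried out in full detail. Your careful bookkeeping that the boundary all-zero rows/columns restricted to a corner block are already contained in the corresponding corner function is the right way to justify that no $0$-entry is double-counted.
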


When $s=s',t=t'$, the following statement follows.
\begin{corollary}\label{cor:nonempty}
Let $Q$ be an $s \times t$ $(0,1)$-matrix such that row $1$, row $s$, column $1$, and column $t$ each contain at least one $1$-entry, and let $m \ge 2s$ and $n \ge 2t$. Then
$$
    \mathrm{m}(m,n,Q)=mn - \left(|NW(Q)| + |SW(Q)| + |NE(Q)| + |SE(Q)|\right).
$$
\end{corollary}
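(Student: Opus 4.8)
The plan is to obtain this as an immediate specialization of the preceding corollary. First I would observe that the hypothesis — that row $1$, row $s$, column $1$, and column $t$ of $Q$ each contain a $1$-entry — says precisely that $Q$ has no all-zero boundary row or column, i.e.\ that $Q_{core}=Q$, so that $s'=s$ and $t'=t$. Substituting $s'=s$ and $t'=t$ into the formula of the preceding corollary makes the two correction terms $(m-2s)(t-t')$ and $(n-2t)(s-s')$ vanish, leaving exactly
\[
    \m(m,n,Q) = mn - \bigl(|NW(Q)| + |SW(Q)| + |NE(Q)| + |SE(Q)|\bigr),
\]
as claimed.

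Alternatively, and a little more transparently, I would argue straight from the description of the extremal matrix as $A_{m,n,Q}$. Under the hypothesis, conditions (5)--(8) in the definition of $A_{m,n,Q}$ force it to have no all-zero boundary row or column, since $Q$ has none; hence by condition (9) every $0$-entry of $A_{m,n,Q}$ lies in one of the four corner $s\times t$ submatrices $A_{NW},A_{NE},A_{SE},A_{SW}$, and by conditions (1)--(4) the $0$-entries inside those submatrices are exactly the positions recorded by $NW(Q)$, $NE(Q)$, $SE(Q)$, $SW(Q)$, respectively. Because $m\ge 2s$ and $n\ge 2t$, the four corner submatrices occupy pairwise disjoint sets of positions, so the number of $0$-entries of $A_{m,n,Q}$ is the sum of the four corner-function cardinalities. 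Subtracting from $mn$ and invoking the fact (from the theorem together with Lemma~\ref{lemma:unique}) that $A_{m,n,Q}$ is the minimum $Q$-forcing matrix then yields the formula.

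There is no genuine obstacle here: the only points needing a word are the disjointness of the four corner submatrices (immediate from $m\ge 2s$, $n\ge 2t$) and the identification $Q_{core}=Q$ from the boundary hypothesis; both are routine. The statement is simply the clean form of the general counting identity when $Q$ already equals its own core.
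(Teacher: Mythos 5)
Your proposal is correct and takes essentially the same route as the paper: the paper obtains this corollary precisely as in your first paragraph, by noting that the boundary hypothesis gives $Q_{core}=Q$ (so $s'=s$, $t'=t$) and that the two correction terms $(m-2s)(t-t')$ and $(n-2t)(s-s')$ in the preceding corollary then vanish. Your alternative direct count of the $0$-entries of $A_{m,n,Q}$ (using disjointness of the four corner $s\times t$ blocks from $m\ge 2s$, $n\ge 2t$) is sound, but it is just the counting argument that already underlies that preceding corollary, so it is not a genuinely different approach.
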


We can also express $\m(m,n,Q)$ in terms of the corner functions of $Q_{core}$.
\begin{theorem}
Let \(Q\) be an \(s \times t\) \((0,1)\)-matrix, and let the $s'\times t'$ matrix \(Q_{\mathrm{core}}\) denote its core. 
If  $m - (s - s') \ge 2s'$ and $n - (t - t') \ge 2t'$, then
\begin{equation*}
\begin{split}
    \mathrm{m}(m,n,Q) 
    =& \left(m - (s-s')\right)\left(n - (t-t')\right) \\
    &- \left(|NW(Q_{\mathrm{core}})| + |SW(Q_{\mathrm{core}})| + |NE(Q_{\mathrm{core}})| + |SE(Q_{\mathrm{core}})|\right).
\end{split}    
\end{equation*}
\end{theorem}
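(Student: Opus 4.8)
The plan is to reduce the statement about $Q$ to the already-established formula for a matrix whose boundary rows and columns each contain a $1$-entry, applied to $Q_{\mathrm{core}}$. First I would observe that the algorithm in Lemma~\ref{lemma:unique}, applied to $Q$ in an $m\times n$ ambient matrix, interacts with the all-zero boundary strips of $Q$ in a transparent way: if $Q$ has $a$ consecutive all-zero top rows, $b$ consecutive all-zero bottom rows, $c$ consecutive all-zero leftmost columns, and $d$ consecutive all-zero rightmost columns (so that $s' = s - a - b$ and $t' = t - c - d$), then by the structure theorem the matrix $A_{m,n,Q}$ has exactly $a$ all-zero top rows, $b$ all-zero bottom rows, $c$ all-zero leftmost columns, and $d$ all-zero rightmost columns, and deleting these strips leaves an $(m-a-b)\times(n-c-d)$ matrix. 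The key claim is that this remaining matrix is exactly $A_{m-a-b,\,n-c-d,\,Q_{\mathrm{core}}}$, i.e. the output of the algorithm in Lemma~\ref{lemma:unique} applied to $Q_{\mathrm{core}}$ in the smaller ambient matrix.

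To prove that claim I would argue that the two algorithms produce the same $1$-positions. A position $(i,j)$ in the interior block (away from all four zero strips) is set to $1$ by the $Q$-algorithm precisely when some placement of $Q$ by consecutive rows and columns covers $(i,j)$ with a $1$-entry of $Q$; since every $1$-entry of $Q$ lies strictly inside $Q$'s zero-strip boundary, such a placement of $Q$ restricts to a placement of $Q_{\mathrm{core}}$ by consecutive rows and columns of the smaller ambient matrix covering $(i,j)$ with the corresponding $1$-entry of $Q_{\mathrm{core}}$, and conversely every placement of $Q_{\mathrm{core}}$ extends (in at least one way, using the hypotheses $m-(s-s')\ge 2s'$ and $n-(t-t')\ge 2t'$ to guarantee enough room) to a placement of $Q$. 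Hence the interior $1$-positions coincide. Combined with the fact that the boundary strips of $A_{m,n,Q}$ are all-zero by the structure theorem, this shows $A_{m,n,Q}$ is $A_{m-a-b,\,n-c-d,\,Q_{\mathrm{core}}}$ bordered by the appropriate zero strips.

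Finally I would invoke Corollary~\ref{cor:nonempty} applied to $Q_{\mathrm{core}}$, which is legitimate because $Q_{\mathrm{core}}$ has a $1$-entry in each of its first row, last row, first column, and last column by construction, and because the dimension hypotheses $m-(s-s')\ge 2s'$ and $n-(t-t')\ge 2t'$ are exactly the conditions $m' \ge 2s'$ and $n'\ge 2t'$ with $m' = m-a-b$ and $n' = n-c-d$. This gives
\[
\m(m-a-b,\,n-c-d,\,Q_{\mathrm{core}}) = (m-a-b)(n-c-d) - \bigl(|NW(Q_{\mathrm{core}})| + |SW(Q_{\mathrm{core}})| + |NE(Q_{\mathrm{core}})| + |SE(Q_{\mathrm{core}})|\bigr),
\]
and since adding all-zero border strips contributes no $1$-entries, $\m(m,n,Q)$ equals the same quantity; writing $a+b = s-s'$ and $c+d = t-t'$ yields $(m-a-b)(n-c-d) = (m-(s-s'))(n-(t-t'))$, which is the claimed formula.

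I expect the main obstacle to be the bookkeeping in the claim that the interior block of $A_{m,n,Q}$ coincides with $A_{m-a-b,\,n-c-d,\,Q_{\mathrm{core}}}$: one must carefully check that a placement of $Q_{\mathrm{core}}$ by consecutive rows and columns of the smaller matrix really does extend to a placement of $Q$ by consecutive rows and columns that still fits inside the $m\times n$ matrix, which is where the room hypotheses $m-(s-s')\ge 2s'$ and $n-(t-t')\ge 2t'$ enter, and also that the zero-strip widths of $A_{m,n,Q}$ are exactly those of $Q$ rather than larger — this last point follows from the structure theorem, but one must phrase it precisely, noting that $Q$ itself is not all-zero so the strips do not overlap destructively.
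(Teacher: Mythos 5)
Your proof is correct and takes essentially the same route as the paper: identify placements of $Q$ in the $m\times n$ ambient matrix with placements of $Q_{\mathrm{core}}$ in the $\left(m-(s-s')\right)\times\left(n-(t-t')\right)$ interior block, observe that the all-zero border strips contribute no $1$-entries, and then apply Corollary~\ref{cor:nonempty} to $Q_{\mathrm{core}}$, whose hypotheses are exactly the stated dimension conditions. The one slip is your appeal to the structure theorem for $A_{m,n,Q}$: its hypotheses $m\ge 2s$, $n\ge 2t$ are not implied by $m-(s-s')\ge 2s'$ and $n-(t-t')\ge 2t'$ (so $A_{m,n,Q}$ need not even be defined here), but the facts you draw from it—that the algorithm of Lemma~\ref{lemma:unique} never places a $1$-entry in the boundary strips, and that exact strip widths are irrelevant to the count—follow directly from your own placement correspondence, so the argument is easily repaired and matches the paper's proof in substance.
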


\begin{proof}
By Lemma~\ref{lemma:unique}, the unique $m\times n$ $Q$-forcing matrix with $\m(m,n,Q)$ $1$-entries is obtained setting all necessary entries to $1$ while moving $Q$ around in the $m\times n$ ambient matrix. It is equivalent to setting all necessary entries to $1$ while moving $Q_{core}$ around in the corresponding $\left(m-(s-s')\right)\times\left(n-(t-t')\right)$ submatrix of the $m\times n$ ambient matrix. Since $Q_{core}$ has $1$-entries in its first and last rows and its first and last columns, the result thus follows from Corollary~\ref{cor:nonempty}.
\end{proof}

As permutation matrices form an important class of $(0,1)$-matrices, we now turn our attention to permutation patterns and apply Corollary~\ref{cor:nonempty} as a powerful tool.

\begin{lemma}
Let $n \ge 2k$ and let $P$ be a $k \times k$ permutation matrix. Then
$$
    \mathrm{m}(n,P) \ge n^2 - k(k-1).
$$
The equality holds
if and only if $P$ is $I_k$ or its transpose $I_k^t$.
\end{lemma}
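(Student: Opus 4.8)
The plan is to reduce everything to Corollary~\ref{cor:nonempty}. Since $P$ is a $k\times k$ permutation matrix, each of its rows and columns---in particular rows $1$ and $k$ and columns $1$ and $k$---contains a $1$-entry, and $n\ge 2k$, so Corollary~\ref{cor:nonempty} applies and gives
\[
\m(n,P)=n^2-\bigl(|NW(P)|+|NE(P)|+|SE(P)|+|SW(P)|\bigr).
\]
Hence the statement is equivalent to proving that $S(P):=|NW(P)|+|NE(P)|+|SE(P)|+|SW(P)|\le k(k-1)$, with equality precisely when $P$ is $I_k$ or $I_k^t$.

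Next I would evaluate the four corner sizes row by row in terms of the permutation $\pi$ with $P_{i,\pi(i)}=1$, using the dominance descriptions of the corner functions. In row $i$, the cells of $NW(P)$ are exactly those $(i,j)$ with $j<\min_{i'\le i}\pi(i')$, contributing $\min_{i'\le i}\pi(i')-1$ to $|NW(P)|$; symmetrically, row $i$ contributes $\min_{i'\ge i}\pi(i')-1$ to $|SW(P)|$, contributes $k-\max_{i'\le i}\pi(i')$ to $|NE(P)|$, and contributes $k-\max_{i'\ge i}\pi(i')$ to $|SE(P)|$. Summing over $i$,
\[
S(P)=\sum_{i=1}^{k}\Bigl[(2k-2)-\bigl(\max_{i'\le i}\pi(i')-\min_{i'\le i}\pi(i')\bigr)-\bigl(\max_{i'\ge i}\pi(i')-\min_{i'\ge i}\pi(i')\bigr)\Bigr].
\]
Since $\pi(1),\dots,\pi(i)$ are $i$ distinct integers their range is at least $i-1$, and $\pi(i),\dots,\pi(k)$ are $k-i+1$ distinct integers so their range is at least $k-i$. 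Therefore each summand is at most $(2k-2)-(i-1)-(k-i)=k-1$, giving $S(P)\le k(k-1)$, which is exactly the claimed bound $\m(n,P)\ge n^2-k(k-1)$.

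For the equality case I would analyze when every summand equals $k-1$: this forces, for every $i$, that $\{\pi(1),\dots,\pi(i)\}$ consists of $i$ consecutive integers and $\{\pi(i),\dots,\pi(k)\}$ consists of $k-i+1$ consecutive integers. Taking $i=2$ in the second condition, the $(k-1)$-element subset $\{\pi(2),\dots,\pi(k)\}$ of $[k]$ is an interval, forcing $\pi(1)\in\{1,k\}$. A short induction on $i$ using the prefix conditions then yields $\pi(i)=i$ for all $i$ when $\pi(1)=1$ and $\pi(i)=k+1-i$ for all $i$ when $\pi(1)=k$; conversely, for the identity and the reversal every prefix and every suffix is an interval, so all the range bounds are tight and $S(P)=k(k-1)$. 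These two permutations are precisely $P=I_k$ and $P=I_k^t$, completing the characterization. The only steps needing care are fixing the orientation in the four corner counts and the inductive identification of the equality cases; neither is a genuine obstacle once the reduction to Corollary~\ref{cor:nonempty} is in hand.
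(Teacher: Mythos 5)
Your proof is correct, but after the shared first step (reducing via Corollary~\ref{cor:nonempty} to bounding $S(P)=|NW(P)|+|NE(P)|+|SE(P)|+|SW(P)|$) it proceeds quite differently from the paper. The paper observes that for a permutation matrix the four corner regions are pairwise disjoint subsets of the $k(k-1)$ zero positions, which gives $S(P)\le k(k-1)$ at once; for strictness it invokes the structural fact that any $P$ other than the identity and the reversal contains, in three consecutive rows and columns, one of four non-monotone $3\times3$ patterns whose central zero lies in none of the corner regions. You instead compute $S(P)$ exactly, row by row, in terms of prefix and suffix minima/maxima of the permutation, bound each row's contribution by $k-1$ via the observation that $i$ (resp.\ $k-i+1$) distinct integers have range at least $i-1$ (resp.\ $k-i$), and characterize equality by forcing every prefix and suffix of $\pi$ to be an interval, which an easy induction pins down to the identity or the reversal. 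Your route buys an exact formula for the deficiency $k(k-1)-S(P)$, treats both directions of the equality case uniformly (including $k=1,2$, which the paper handles separately), and avoids relying on the consecutive-$3\times3$-pattern claim, which the paper states without proof; the paper's argument is shorter and more visual granted that claim and the disjointness observation. One cosmetic remark: like the paper, you describe the reversal matrix as $I_k^t$, though literally $I_k^t=I_k$; the matrix your equality analysis produces is the anti-diagonal (Hankel) identity, which is clearly what the statement intends.
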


\begin{proof}
For $k=2$, the claim is immediate. Suppose that $k \ge 3$.

Every permutation matrix $P$ has exactly one $1$ in each row and each column. By Corollary~\ref{cor:nonempty}, we have
$$
    \mathrm{m}(n,P) = n^2 - \left(|NW(P)| + |SW(P)| + |NE(P)| + |SE(P)|\right).
$$

Since every $k \times k$ permutation matrix has exactly $k(k-1)$ $0$-entries and the corner functions of $P$
are pairwise disjoint, we have
$$
    |NW(P)| + |SW(P)| + |NE(P)| + |SE(P)|\le k(k-1)
$$
and the inequality follows. The equality clearly holds when $P$ is $I_k$ or $I_k^t$.
If $P$ is a $k \times k$ permutation matrix not equal to $I_k$ or $I_k^t$, then $P$ has a submatrix sliced from three consecutive columns and three consecutive rows that is equal to one of the following $3 \times 3$ permutation matrices:
\[
    P_1=\begin{bmatrix}
    1&0&0\\
    0&\cellcolor{yellow!20}0&1 \\
    0&1&0
    \end{bmatrix},\quad
    P_2=\begin{bmatrix}
    0&1&0\\
    0&\cellcolor{yellow!20}0&1 \\
    1&0&0
    \end{bmatrix},\quad
    P_3=\begin{bmatrix}
    0&1&0\\
    1&\cellcolor{yellow!20}0&0 \\
    0&0&1
    \end{bmatrix},\quad
    P_4=\begin{bmatrix}
    0&0&1\\
    1&\cellcolor{yellow!20}0&0 \\
    0&1&0
    \end{bmatrix}.
\]
In each case, the central $0$-entry, highlighted in yellow, does not belong to any of the corner functions of $P$. Consequently
$$
    |NW(P)| + |SW(P)| + |NE(P)| + |SE(P)|< k(k-1)
$$
and hence $\m(n,P)>n^2-k(k-1)$.
\end{proof}

\noindent
We also find the $k\times k$ permutation matrices that maximizes $\m(n,P)$.
First, we define the \emph{perimeter} of a matrix as the union of its first and last rows together with its first and last columns.

\begin{lemma}
Let $n \ge 2k$, and let $P=[p_{ij}]$ be a $k \times k$ permutation matrix. Then
\[
    \max_P \, \mathrm{m}(n,P) =
    \begin{cases}
    n^2 & k=1, \\[6pt]
    n^2-2 & k=2, \\[6pt]
    n^2-5 & k=3, \\[6pt]
    n^2-4k+8 & k \ge 4.
    \end{cases}
\]
For $k \ge 4$, this maximum is attained if and only if either
\[
    p_{1,2}, \; p_{2,k}, \; p_{k,k-1}, \; p_{k-1,1} \quad \text{are all $1$-entries},
\]
or
\[
    p_{2,1}, \; p_{k,2}, \; p_{k-1,k}, \; p_{1,k-1} \quad \text{are all $1$-entries}.
\]
\end{lemma}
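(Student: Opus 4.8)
The plan is to reduce the problem to maximizing the total size of the four corner functions of a $k\times k$ permutation matrix $P$, since by Corollary~\ref{cor:nonempty} we have $\mathrm{m}(n,P)=n^2-\bigl(|NW(P)|+|SW(P)|+|NE(P)|+|SE(P)|\bigr)$ whenever $n\ge 2k$ (and row $1$, row $k$, column $1$, column $k$ of a permutation matrix each contain a $1$-entry, so the hypothesis of the corollary is met). Thus $\max_P \mathrm{m}(n,P)=n^2-\min_P\bigl(|NW(P)|+|SW(P)|+|NE(P)|+|SE(P)|\bigr)$, and the entire content of the lemma is the purely combinatorial statement that this minimum equals $0,2,5,4k-8$ for $k=1,2,3,k\ge4$ respectively, together with the stated characterization of the extremal $P$. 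The small cases $k\le 3$ can be checked by brute force over the $1,2,6$ permutation matrices.

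For $k\ge 4$, first I would establish the lower bound $|NW(P)|+|SW(P)|+|NE(P)|+|SE(P)|\ge 4k-8$. The key observation is that each corner function is determined by a ``staircase'' along the appropriate side: $|NW(P)|$ counts $0$-positions dominating no $1$-entry, which is a decreasing step function governed by how far into the matrix the $1$-entries of the top rows and left columns sit. Concretely, for each row $i$ let $c_i$ be the column of its $1$-entry; then $NW(P)$ consists of the positions $(i,j)$ with $j<c_{i'}$ for all $i'\le i$, so $|NW(P)|=\sum_{i=1}^{k}\bigl(\min_{i'\le i} c_{i'}-1\bigr)$. In particular $NW(P)$ is empty iff $c_1=1$, i.e.\ iff $p_{1,1}=1$, and otherwise $|NW(P)|\ge 1$ with a more refined bound depending on $c_1$. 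I would derive the analogous formulas for the other three corners (via the flips: $NE$ uses $k+1-c_i$ scanning from the top, $SW$ uses $c_i$ scanning from the bottom, $SE$ uses $k+1-c_i$ scanning from the bottom), and then argue that the ``cheapest'' way to make all four corners small forces the $1$-entries near the four corners of $P$ to occupy specific near-corner cells—namely the perimeter cells adjacent to the corners. Summing the minimal contributions of the four corners and checking they cannot all be simultaneously zero (a permutation matrix cannot have $1$-entries at all four corners $(1,1),(1,k),(k,1),(k,k)$) yields $4k-8$.

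The main obstacle I expect is the interaction between corners: the four corner functions are pairwise disjoint for a permutation matrix, but the choices of which cells carry the near-corner $1$-entries are not independent—a single $1$-entry in a corner row or column simultaneously constrains two adjacent corner functions (e.g.\ $p_{1,2}=1$ helps both $NW$ and $NE$), and one must avoid double-counting savings. The clean way around this is to bound each corner separately in terms of the positions of the $1$-entries in rows $1,2$ and $k-1,k$ and columns $1,2$ and $k-1,k$, then optimize over the finitely many configurations of those eight boundary $1$-entries, noting that interior rows/columns contribute nothing to any corner once the boundary is fixed favorably. This reduces to a small finite case analysis: the optimum is achieved exactly when the four near-corner $1$-entries form one of the two ``pinwheel'' patterns in the statement ($p_{1,2},p_{2,k},p_{k,k-1},p_{k-1,1}$ all $1$, or its reflection $p_{2,1},p_{k,2},p_{k-1,k},p_{1,k-1}$ all $1$), and in each of those cases a direct computation gives $|NW(P)|=|NE(P)|=|SE(P)|=|SW(P)|=k-2$, totalling $4k-8$. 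Finally I would verify that any other placement of boundary $1$-entries strictly increases the total, completing both the bound and the uniqueness characterization.
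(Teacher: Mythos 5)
Your reduction is the right one (and is the paper's as well): by Corollary~\ref{cor:nonempty}, $\max_P \m(n,P)=n^2-\min_P\bigl(|NW(P)|+|NE(P)|+|SE(P)|+|SW(P)|\bigr)$, the cases $k\le 3$ are a finite check, and your computation that either pinwheel configuration gives $|NW|=|NE|=|SE|=|SW|=k-2$ regardless of the interior entries is correct. The genuine gap is in the two steps that carry the whole weight for $k\ge 4$: the lower bound $4k-8$ and the ``only if'' direction. Your stated justification for the lower bound --- ``summing the minimal contributions of the four corners and checking they cannot all be simultaneously zero yields $4k-8$'' --- is a non sequitur: ruling out all four corner functions being empty only gives a total of at least $1$. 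The actual difficulty is the trade-off you yourself flag: making one corner function empty (a $1$-entry at a corner of $P$) forces the two adjacent corner functions to have size at least $k-1$ each, and you never quantify this interaction. Your proposed fix --- reduce to ``finitely many configurations of the eight boundary $1$-entries,'' on the grounds that interior rows/columns contribute nothing once the boundary is fixed --- is false as stated: by your own formula $|NW(P)|=\sum_{i=1}^{k}\bigl(\min_{i'\le i}c_{i'}-1\bigr)$, if $p_{1,4}=1$ then $|NW(P)|$ depends on the rows of the $1$-entries of columns $2$ and $3$, which need not lie in rows $1,2,k-1,k$; moreover the positions of the eight boundary $1$-entries range over $\Theta(k)$ choices each, so this is not a bounded case analysis. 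To make your route work you would need an explicit exchange/monotonicity argument pushing the optimum to near-corner configurations, and that argument is exactly what is missing.

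For comparison, the paper avoids all of this with one observation: every $0$-entry on the perimeter of $P$ belongs to one of the four (pairwise disjoint) corner functions, and since a permutation matrix has at most four $1$-entries on its perimeter, the perimeter contains at least $4k-8$ zeros, giving the bound immediately; if some corner of $P$ is a $1$-entry the perimeter has at least $4k-7$ zeros, so equality forces all four corners to be $0$. For the characterization, equality also forces each $2\times 2$ corner block of $P$ to contain a $1$-entry (otherwise an interior position such as $(2,2)$ lies in a corner function, adding to the perimeter count), and a short forcing argument starting from ``one of $p_{1,2},p_{2,1}$ is a $1$'' then pins down the two pinwheel patterns. I would recommend replacing your optimization sketch with an argument of this type, or else supplying the exchange lemma your plan presupposes.
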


\begin{proof}
We consider $k\ge4$ since the cases $k\le3$ are straightforward to verify.

For the first part, observe that the position of every $0$-entry on the perimeter of $P$ belongs to one of the corner functions of $P$.
Since the perimeter of $P$ has at least $4k-8$ $0$-entries, we have
$$
    \m(n,P)\le n^2-(4k-8).
$$
Equality cannot hold if any corner of $P$ is $1$-entry. For example, if both the upper right and lower left corners are $1$-entries, then the perimeter of $P$ contains $4k-6$ $0$-entries, implying
\[
    \mathrm{m}(n,P) \;\le\; n^2 - (4k-6) \;<\; n^2 - (4k-8).
\]

For the second part, suppose that $P$ is a $k \times k$ and $\mathrm{m}(n,P) = n^2 - 4k + 8$. None of the four corners of $P$ can be $1$, or else the perimeter would contain at least $4k-7$ $0$-entries. The upper left $2\times2$ submatrix of $P$ cannot be all-zero, or else $(2,2)\in NW(P)$ and the maximality of $\m(n,P)$ is violated. Similarly none of the upper right, lower left, and lower right $2\times2$ submatrices of $P$ is all-zero.
We claim that at least one of
$p_{1,2}$ and $p_{2,1}$ is $1$-entry. If they are both $0$-entries, then $p_{2,2}=1$ or else the upper left $2\times2$ submatrix of $P$ is all-zero. With $p_{2,2}=1$, we have $p_{k-1,1}=p_{1,k-1}=1$ and the lower right $2\times2$ submatrix of $P$ is all-zero, violating the maximality of $\m(n,P)$.
Without loss of generality, assume $p_{1,2}=1$ and we have 
\[
    p_{1,2}, \; p_{2,k}, \; p_{k,k-1}, \; p_{k-1,1}
\]
are all $1$-entries. By symmetry, the only other possibility for $\m(n,P)=n^2-4k+8$ is that
\[
    p_{2,1}, \; p_{k,2}, \; p_{k-1,k}, \; p_{1,k-1}
\]
are all $1$-entries.
\end{proof}

\section{Strongly $Q$-Forcing}\label{sec:StrongQforcing}

We begin with a general result for an arbitrary $(0,1)$-pattern $Q$: the number of $0$-entries of an $m\times n$ strongly $Q$-forcing matrix with maximum possible number of $1$-entries is at most linear in $m+n$.

\begin{lemma}
Let $Q$ be an $s\times t$ $(0,1)$-matrix. Then 
\[
    mn - \mathrm{M}(m,n,Q) = O(m+n).
\]
\end{lemma}

\begin{proof}
We construct an $m\times n$ strongly $Q$-forcing matrix $A$ with $O(m+n)$ $0$-entries.  
Start with $A=Q$. Suppose that the first $s_0$ rows of $Q$ are all-zero, and let the leftmost $1$-entry in row $s_0+1$ of $Q$ be in column $t_1$.  
Replace column $t_1$ of $A$ by $n-t+1$ copies of itself, then replace row $s_0+1$ of $A$ by $m-s+1$ copies of itself.  
The resulting $m\times n$ matrix $A$ is strongly $Q$-forcing and contains only $O(m+n)$ zero entries.
\end{proof}

In the following, for each $2\times 2$ permutation matrix $P$ we determine $\M(n,P)$ and the unique $n\times n$ strongly $P$-forcing matrix with this many $1$-entries. 
Denote by $J_n$ the $n\times n$ all-one matrix and by $H_n$ the $n\times n$ Hankel identity matrix that is the $n\times n$ $(0,1)$-matrix with $1$-entries on the anti-diagonal $(1,n),(2,n-1),\dots,(n,1)$ and $0$-entries elsewhere.

\begin{lemma} \label{lemma:MI2}
Let $n\ge2$
 and let $Q$ be a $2 \times 2$ permutation matrix. Then
\[
    \M(n, Q) = n^2 - n.
\]
Moreover, the unique strongly $I_2$-forcing $(0,1)$-matrix containing $n^2 - n$ $1$-entries is \( J_n - H_n \);
the unique strongly $H_2$-forcing $(0,1)$-matrix containing $n^2 - n$ $1$-entries is \( J_n - I_n \).
\end{lemma}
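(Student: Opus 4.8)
The plan is to analyze the structure of a strongly $I_2$-forcing matrix $A$ (the $H_2$ case being identical by reflecting columns, since $H_2$ is obtained from $I_2$ by a horizontal flip). Recall that $I_2=\begin{bmatrix}1&0\\0&1\end{bmatrix}$, so a copy of $I_2$ in $A$ is a pair of positions $(i,j)$ and $(i',j')$ with $i<i'$, $j<j'$, $A_{i,j}=A_{i',j'}=1$, and $A_{i,j'}=A_{i',j}=0$. The strong forcing condition says: every $1$-entry of $A$ must be one of the two $1$-entries of such a configuration. First I would establish the lower bound on the number of $0$-entries (equivalently the upper bound on $|A|$) by showing that for each $k$ with $1\le k\le n$, the anti-diagonal cell $(k,n+1-k)$ must be a $0$-entry — or, more precisely, that $A$ has at least $n$ zeros arranged so that $|A|\le n^2-n$. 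The cleanest route is: if $A_{i,j}=1$, then the witnessing copy of $I_2$ forces a $0$ either strictly northeast or strictly southwest of $(i,j)$; iterating this produces a $0$ in a controlled region, and a counting/pigeonhole argument across the $n$ anti-diagonals yields at least $n$ zeros. Then I would show $J_n-H_n$ is in fact strongly $I_2$-forcing, which is the easy direction: any $1$-entry $(i,j)$ of $J_n-H_n$ satisfies $i+j\ne n+1$; pair it with the anti-diagonal $0$ in its row and the anti-diagonal $0$ in its column to exhibit an explicit $I_2$ (one checks $i<$ or $>$ the row index of the partner and similarly for columns, using $i+j\ne n+1$, and that the "corner" entries are the anti-diagonal zeros).

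For the uniqueness part — which I expect to be the main obstacle — I would argue that any strongly $I_2$-forcing $A$ with exactly $n^2-n$ ones has its $n$ zeros forced into the anti-diagonal positions. Suppose $A$ has exactly $n$ zeros. The key claim is that each row contains exactly one zero and each column contains exactly one zero: if some row were all ones, consider its entries; each needs a witness, and one shows this forces "too many" zeros in neighboring rows, contradicting the total count of $n$; a symmetric statement holds for columns, and a row with two or more zeros then forces a row with none, again a contradiction. Once every row and every column has exactly one zero, the zero positions form a permutation matrix $H$, so $A=J_n-H$. It remains to show $H=H_n$. For this I would take the permutation $\sigma$ with $H_{i,\sigma(i)}=0$ and show $\sigma$ must be the reversal permutation: if $\sigma(i)<\sigma(i')$ for some $i<i'$ (an "ascent" in the pattern of zeros, i.e. an $I_2$ among the zeros), then the $1$-entry at $(i,\sigma(i'))$ — which lies northeast-ish relative to the two zeros — has no valid witness, because any copy of $I_2$ through it would need a $0$ either strictly NE or strictly SW of $(i,\sigma(i'))$, and the unique zeros available in its row and column are at $(i,\sigma(i))$ and $(i',\sigma(i'))$, neither in a usable corner position. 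Making this local obstruction precise is the crux; it amounts to checking that the only way every $1$ in $J_n-H$ gets witnessed is when the zeros are totally ordered in the "decreasing" sense, i.e. $H=H_n$.

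Finally, I would assemble the pieces: the counting bound gives $\M(n,Q)\le n^2-n$, the explicit matrices $J_n-H_n$ (for $I_2$) and $J_n-I_n$ (for $H_2$) achieve it and are strongly forcing, and the uniqueness argument shows no other extremal matrix exists. Throughout, the symmetry $H_2 = P\,I_2\,$ obtained by swapping the two columns lets me transfer every statement about $I_2$ to $H_2$ by applying the column-reversal operation $A\mapsto A\,H_n$, so I would only prove everything once for $I_2$ and remark on the translation at the end. The main delicacy to watch is the base of the uniqueness induction and ensuring the "forces a row with no zeros" step is airtight for small $n$; I would handle $n=2,3$ by direct inspection and run the general argument for $n\ge 4$.
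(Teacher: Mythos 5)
Your uniqueness argument is sound, but the upper-bound step as you propose it rests on false premises and would not go through. If $(i,j)$ is a $1$-entry witnessed by a copy of $I_2$, the two $0$-entries of that copy sit in the \emph{same row} and the \emph{same column} as $(i,j)$ (due east and due south when $(i,j)$ is the top-left $1$, due west and due north when it is the bottom-right $1$) — not ``strictly northeast or strictly southwest.'' Your stronger claim that each anti-diagonal cell $(k,n+1-k)$ must be a $0$ is also false for general strongly $I_2$-forcing matrices: $I_3$ is strongly $I_2$-forcing and has a $1$ at $(2,2)$. And a pigeonhole over the $n$ anti-diagonals cannot yield $n$ zeros, since in the extremal matrix $J_n-H_n$ all $n$ zeros lie on a single anti-diagonal, so the anti-diagonals are the wrong pigeonholes. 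The correct (and one-line) argument, which is the paper's, is exactly the row/column fact you implicitly invoke later: every witness of a $1$-entry places a $0$ in that entry's row and a $0$ in its column, so no row (and no column) of a strongly $I_2$-forcing matrix can be zero-free; hence there are at least $n$ zeros and $\M(n,I_2)\le n^2-n$. This also replaces your convoluted ``too many zeros in neighboring rows'' route to ``each row and each column has exactly one zero'': with exactly $n$ zeros total and at least one per row and per column, that conclusion is immediate, with no induction on $n$ or separate treatment of small cases.

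Once that repair is made, your uniqueness argument is correct and genuinely different from the paper's. The paper marches along the anti-diagonal: $(1,n)$ must be $0$ (a $1$ there could have no $0$ above nor to its right), then $(2,n-1)$, and so on. You instead argue globally: the $n$ zeros form a permutation $\sigma$, and if $\sigma$ had an ascent $i<i'$, $\sigma(i)<\sigma(i')$, then the $1$-entry at $(i,\sigma(i'))$ has its unique row-zero strictly to its left and its unique column-zero strictly below it, whereas a witness requires either (left and above) or (right and below); so $\sigma$ is the decreasing permutation and $A=J_n-H_n$. That local obstruction is exactly right provided you state the witness condition in the row/column form above rather than the NE/SW form. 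Two small additions would complete the sketch: when checking that $J_n-H_n$ is strongly $I_2$-forcing, verify that the fourth corner $(n+1-j,\,n+1-i)$ is a $1$ (it is, since $i+j\ne n+1$); and the transfer to $H_2$ by column reversal is fine and matches the paper's symmetry reduction.
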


\begin{proof}
By symmetry it suffices to prove the result for $Q=I_2$.
Let $A$ be an $n\times n$ strongly $I_2$-forcing matrix. Observe that every $1$-entry of $A$ either has a $0$-entry above it in the same column and a $0$-entry to its left in the same row, or has a $0$-entry below it in the same column and a $0$-entry to its right in the same row.

For the first part, every row and every column of $I_2$ has one $0$-entry, so every row and every column of $A$ has one $0$-entry. Hence $A$ has at least $n$ $0$-entries and at most $n^2-n$ $1$-entries. The matrix $J_n - H_n$ is strongly $I_2$-forcing and has $n^2-n$ $1$-entries. Hence,
\[
    \M(n,I_2) = n^2 - n.
\]

For the second part, suppose that each row and each column of $A$ has exactly one $0$-entry.
By definition, the entry $(1,n)$ of $A$ must be a $0$-entry. Hence, the rest of row $1$ and column $n$ of $A$ are all-one. If the entry $(2,n-1)$ is not a $0$-entry, then it does not have a $0$-entry above it in the same column and a $0$-entry to its left in the same row, and it does not have a $0$-entry below it in the same column and a $0$-entry to its right in the same row. Therefore the entry $(2,n-1)$ of $A$ is a $0$-entry and the rest of row $2$ and column $n-1$ are all-one. It could be seen that continuing this reasoning we can conclude that all $0$-entries of $A$ lie on its anti-diagonal, i.e., $A=J_n-H_n$.
\end{proof}

Below we give a recurrence for the lower bound on $\M(n,I_k)$. 
\begin{proposition} \label{pro1}
For $1\le k\le n$,
$$
    \M(n,I_k)\ge v
$$
where
$$
v=\max\left\{\M(n_1,I_{k_1})+\M(n_2,I_{k_2})\right\}\text{ subject to }
\begin{cases}
    1\le n_1,n_2\le n\\
    k_1+k_2=k\\
    n_1+n_2=n\\
    1\le k_1\le n_1\\
    1\le k_2\le n_2\\
\end{cases}.
$$
\end{proposition}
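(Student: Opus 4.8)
The plan is to construct an explicit $n \times n$ strongly $I_k$-forcing matrix whose number of $1$-entries equals $\M(n_1,I_{k_1}) + \M(n_2,I_{k_2})$ for any feasible choice of parameters $(n_1,n_2,k_1,k_2)$, and then take the maximum over all such choices. Fix one feasible tuple, and let $A_1$ be an $n_1 \times n_1$ strongly $I_{k_1}$-forcing matrix realizing $\M(n_1,I_{k_1})$ and $A_2$ an $n_2 \times n_2$ strongly $I_{k_2}$-forcing matrix realizing $\M(n_2,I_{k_2})$. The natural candidate is the block-diagonal matrix
\[
    A = \begin{bmatrix} A_1 & O \\ O & A_2 \end{bmatrix},
\]
where the off-diagonal blocks are all-zero. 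This matrix has exactly $\M(n_1,I_{k_1}) + \M(n_2,I_{k_2})$ $1$-entries, so it suffices to verify that $A$ is strongly $I_k$-forcing, which then yields $\M(n,I_k) \ge |A| = \M(n_1,I_{k_1}) + \M(n_2,I_{k_2})$, and maximizing over feasible tuples gives the claimed bound $\M(n,I_k) \ge v$.

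To verify the strong forcing property, take any $1$-entry $o$ of $A$; by the block structure it lies either in the $A_1$ block or in the $A_2$ block. Suppose $o$ lies in $A_1$. Since $A_1$ is strongly $I_{k_1}$-forcing, there is a $k_1 \times k_1$ submatrix of $A_1$ equal to $I_{k_1}$ containing $o$; say it uses rows $R_1$ and columns $C_1$ of $A_1$. Independently, $A_2$ contains at least one copy of $I_{k_2}$ (for instance the copy witnessing any $1$-entry of $A_2$, which exists since $A_2$ is not all-zero as $k_2 \ge 1$ forces $A_2$ to have $1$-entries — here one should note $\M(n_2,I_{k_2}) \ge 1$); let it use rows $R_2$ and columns $C_2$ of $A_2$. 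Now form the submatrix of $A$ indexed by the rows $R_1 \cup (n_1 + R_2)$ and columns $C_1 \cup (n_1 + C_2)$. This is a $k \times k$ submatrix since $k_1 + k_2 = k$; its upper-left $k_1 \times k_1$ part is $I_{k_1}$, its lower-right $k_2 \times k_2$ part is $I_{k_2}$, and its upper-right and lower-left parts are entirely inside the all-zero off-diagonal blocks of $A$, hence zero. Since the rows of $R_1$ precede the rows of $n_1 + R_2$ and likewise for columns, this submatrix equals $I_{k_1} \oplus I_{k_2} = I_k$ exactly, and it contains $o$. The case $o \in A_2$ is symmetric, using any copy of $I_{k_1}$ in $A_1$ together with the copy of $I_{k_2}$ in $A_2$ witnessing $o$.

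The only genuine subtlety — and the step I expect to need the most care — is confirming that each of $A_1$ and $A_2$ actually contains \emph{at least one} copy of the relevant smaller identity, so that the "padding" copy in the other block is available; this is immediate because $k_i \ge 1$ implies $\M(n_i,I_{k_i}) \ge 1$ (a single $1$-entry already forces a copy of $I_1$, and in general a strongly $I_{k_i}$-forcing matrix with a $1$-entry has a copy of $I_{k_i}$ by definition), but it is worth stating explicitly. Everything else is bookkeeping: checking the block-diagonal matrix has the right count of $1$-entries, and checking that the interleaving of row and column indices from the two blocks respects the order so that the assembled submatrix is literally $I_k$ and not some other permutation matrix. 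No optimization or extremal argument is required beyond taking the maximum at the end, since the proposition only asserts a lower bound.
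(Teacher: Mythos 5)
Your proposal is correct and follows essentially the same route as the paper: the block-diagonal matrix $A = \begin{bmatrix} A_1 & 0 \\ 0 & A_2 \end{bmatrix}$ built from extremal strongly $I_{k_1}$- and $I_{k_2}$-forcing matrices, whose $1$-count is $\M(n_1,I_{k_1})+\M(n_2,I_{k_2})$. The paper simply asserts that $A$ is strongly $I_k$-forcing, whereas you spell out the verification (pairing the witnessing copy of $I_{k_i}$ in one block with any copy of $I_{k_j}$ in the other, which exists since $\M(n_j,I_{k_j})\ge k_j\ge 1$); this is the same argument, just made explicit.
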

\begin{proof}
    Given an $n_1\times n_1$ strongly $I_{k_1}$-forcing matrix $A_1$ with $\M(n_1,I_{k_1})$ $1$-entries and an $n_2\times n_2$ strongly $I_{k_2}$-forcing matrix $A_2$ with $\M(n_1,I_{k_1})$ $1$-entries, we obtain an $n\times n$ strongly $I_k$-forcing matrix with $\M(n_1,I_{k_1})+\M(n_2,I_{k_2})$ $1$-entries
    \[
    A = \begin{bmatrix}
    A_1 & 0 \\
    0 & A_2
    \end{bmatrix}.
\]
Therefore the result follows.
\end{proof}

$\M(n,I_3)$ is not as straightforward as $\M(n,I_2)$. The following result provides a lower bound on the number of $0$-entries of a $n\times n$ strongly $I_3$-forcing matrix.

\begin{lemma}\label{lm123}
For $n \geq 3$, we have $\M(n, I_3) \leq n^2 - 3n + 3$.
\end{lemma}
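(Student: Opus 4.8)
The plan is to bound below the number of $0$-entries in any $n\times n$ strongly $I_3$-forcing matrix $A$ by $3n-3$. The key structural observation, analogous to the one used for $I_2$, is that every $1$-entry of $A$ must sit inside a $3\times3$ submatrix equal to $I_3$; in particular, for each $1$-entry $o$ there is a choice of two other rows and two other columns so that, in the $3\times3$ grid they span, $o$ plays one of the three diagonal roles and the other six cells of the grid are $0$. I would first record consequences for the extreme rows and columns: the $(1,1)$, $(n,1)$, $(1,n)$, $(n,n)$ positions and more generally the behavior of $1$-entries near the boundary are constrained, since a $1$ in row $1$ can only be the top-left corner of its witnessing $I_3$, forcing two $0$-entries below it in that column and two to its right in that row (within the witnessing columns).

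Next I would argue a counting bound. The cleanest route is to show that each of the first two rows, each of the last two rows, each of the first two columns, and each of the last two columns must contain ``many'' $0$-entries, or alternatively to set up a charging argument: assign to each $0$-entry a weight and show the total weight is at least $3n-3$ while each $0$-entry contributes a bounded amount. A promising concrete approach: let $z_i$ be the number of $0$-entries in row $i$ and $z'_j$ in column $j$. A $1$-entry acting as the middle diagonal element of its witnessing $I_3$ needs a $0$ above-left, above-right, below-left, below-right — so rows and columns in the ``interior'' still must carry $0$'s. Showing $\sum z_i \ge 3n-3$ directly seems hard, so instead I expect to prove that the all-zero-free structure is impossible and then quantify: if some row had no $0$-entry, every $1$ in it would need to be a middle element, which cascades into contradictions near where that row meets the top or bottom boundary. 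More robustly, I would try to show each of rows $1,2,n-1,n$ contributes at least some fixed count and rows $3,\dots,n-2$ each contribute at least $3$, then reconcile the small overlaps at the corners to land exactly on $3n-3$.

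I suspect the actual argument in the paper is slicker: consider the submatrix structure forced by following a single ``chain'' of witnesses, much as the $I_2$ proof walked down the anti-diagonal. Concretely, the $(1,1)$ entry, if it is a $1$, is the top-left corner of an $I_3$, so there exist columns $1<c_2<c_3$ and rows $1<r_2<r_3$ with $0$'s filling the rest of that $3\times3$ block; iterating this kind of reasoning from each corner inward, one can force three disjoint ``staircases'' or a triangular region of $0$'s of total size $3n-3$, or else locate a $1$-entry that cannot be witnessed. I would carry out the corner analysis first (it is where strong forcing bites hardest, since corner $1$-entries have only one possible diagonal role), then propagate inward.

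The main obstacle, I expect, is the bookkeeping of overlaps: the $0$-entries forced by witnesses for $1$'s near the top-left corner may coincide with those forced near the bottom-left or top-right, so a naive sum overcounts and gives a weaker bound like $n^2 - 2n$. Getting the constant exactly right — $3n-3$ rather than $3n$ or $3n-2$ — will require carefully identifying which of the boundary-forced $0$-entries can be shared (the three ``$+3$'' corrections presumably come from the three corners of a triangular zero-region being counted once instead of the naive three times, or from the three diagonal entries of a single small $I_3$ in the corner). I would handle this by fixing a canonical witness assignment, partitioning the forced $0$-entries into classes indexed by rows (or by diagonal role), and showing the classes are disjoint except for a controlled $O(1)$ correction, rather than trying to track all coincidences at once.
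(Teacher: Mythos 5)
There is a genuine gap: your proposal is a survey of candidate strategies rather than a proof, and the one concrete counting scheme you commit to is aimed at the wrong structure. You suggest showing that rows $3,\dots,n-2$ each contain at least three $0$-entries (with fixed contributions from the boundary rows), but this is false: the extremal matrix $S_n = 1 \oplus (J_{n-1}-H_{n-1})$, which attains $n^2-3n+3$, has exactly two $0$-entries in every row except row $1$ and in every column except column $1$. So the extra $n-3$ zeros beyond the trivial ``two per row'' baseline cannot be located row-by-row in the interior; in the extremal example they are concentrated in a single row and a single column. Your alternative sketches (corner chains forcing a triangular zero region, a canonical witness assignment with a controlled $O(1)$ overlap correction) are never carried out, and you explicitly defer the exact bookkeeping that is the entire difficulty of getting $3n-3$ rather than a weaker constant, so nothing in the proposal certifies the bound.

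For comparison, the paper's argument works differently. Since every $1$-entry needs two witnessing zeros in its own row and in its own column, every row and column of $A$ has at least two zeros; if every column has at least three zeros one is already done, so one may fix a column $c$ with exactly two zeros $z_1=a_{r_1,c}$, $z_2=a_{r_2,c}$. Each $1$-entry of column $c$ is then forced into a specific diagonal role in its witnessing copy of $I_3$ (middle if it lies between $z_1$ and $z_2$, bottom-right if below $z_2$, top-left if above $z_1$), and the zeros its witness forces in other rows and columns are charged as \emph{extra} zeros, i.e.\ zeros beyond the first two of their row or column; counting every zero twice (once in its row, once in its column) and showing the doubly-counted excess satisfies $E\ge 2(n-3)$ yields $\M(n,I_3)\le n^2-2n-(n-3)=n^2-3n+3$. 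The crucial idea you are missing is precisely this: concentrate on the $1$-entries of one deficient column and charge their forced zeros as third-or-later zeros of \emph{other} rows and columns, rather than trying to find three zeros in each interior row.
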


\begin{proof}
Let $Q = [q_{ij}]$ denote the matrix $I_3$, and let $A = [a_{ij}]$ be an $n \times n$ matrix that is strongly $Q$-forcing. Since every row and column of $Q$ contains two $0$-entries, every row and column of $A$ must have at least two $0$-entries.

If every column of $A$ contains at least three $0$-entries, then $A$ has at most $n^2 - 3n$ $1$-entries, and the result follows. Now suppose that some column $c$ of $A$ contains exactly two $0$-entries, denoted by $z_1 = a_{r_1,c}$ and $z_2 = a_{r_2,c}$ with $r_1 < r_2$.

Let $E$ be the total number of \emph{extra $0$-entries}, i.e., $0$-entries beyond the first two occurring in any row or column, in every column and row. Note that if a zero is an extra zero in both its column and row, then we count it twice. Denote by $\min E$ a lower bound on $E$. Counting all $0$-entries in $A$ twice: row by row followed by column by column, we see that $A$ has at least $(2n+2n+\min E)/2$ $0$-entries. Thus we have
\[
    \M(n, I_3) \leq n^2 - \left(2n + \frac{1}{2} \min E\right).
\]

We consider the following two cases:
\begin{enumerate}

\item[$\mathbf{(1)}$] $r_1 + 1 < r_2$

Examine the $1$-entries between $z_1$ and $z_2$, starting from $a_{r_1 + 1, c}$. For each $r \in [r_1 + 1, r_2 - 1]$, let $d_r$ denote the column corresponding to the first column of $Q$, assuming $a_{r,c}$ corresponds to $q_{2,2}$. For each $r \in [r_1 + 2, r_2 - 1]$:

If $d_r \in \{d_s : s \in [r_1 + 1, r - 1]\}$, then column $d_r$ has an extra zero at $a_{r, d_r}$. Otherwise, row $r_2$ has an extra zero at $a_{r_2, d_r}$. These contribute a total of $r_2 - r_1 - 2$ to $E$.

Let $\mathcal{D} = \{d_r : r \in [r_1 + 1, r_2 - 1]\}$. Next, examine the $1$-entries below $z_2$ in column $c$, starting from $a_{r_2 + 1, c}$. For each $r \in [r_2 + 1, n]$, let $c_r$ denote the column corresponding to the second column of $Q$, assuming $a_{r,c}$ corresponds to $q_{3,3}$. Then:

If $c_r \in \mathcal{D} \cup \{c_s : s \in [r_2 + 1, r - 1]\}$, then column $c_r$ has an extra zero at $a_{r, c_r}$. Otherwise, row $r_1$ has an extra zero at $a_{r_1, c_r}$, since $z_1$ corresponds to $q_{1,2}$ and a $q_{1,3}$ zero is required to its right.

These contribute $n - r_2$ to $E$. A similar argument applies for the $0$-entries corresponding to $q_{2,1}$ or $q_{3,1}$, contributing another $n - r_2$.

By symmetry, the $0$-entries to the right of column $c$ contribute:
\[
    2(r_1 - 1) + (r_2 - r_1 - 2).
\]

Summing all contributions:
\[
    E \ge 2(n - r_2) + 2(r_1 - 1) + 2(r_2 - r_1 - 2) = 2(n - 3),
\]
and therefore:
\[
    \M(n, I_3) \le n^2 - 2n - (n - 3) = n^2 - 3n + 3.
\]

\item[$\mathbf{(2)}$] $r_1 + 1 = r_2$

If $z_1$ and $z_2$ are located at the top or bottom of column $c$, the same argument as above applies, yielding $E \ge 2(n - 3)$.

Suppose instead that $z_1$ and $z_2$ are not at the top or bottom. Then the one at $a_{r_1 - 1, c}$ (just above $z_1$) forces a zero to the right of $z_1$ corresponding to $q_{2,3}$.
Therefore $0$-entries corresponding to $q_{2,1}$ or $q_{3,1}$ for $1$-entries below row $r_2+1$ contribute $n - r_2 - 1$ to $E$. $0$-entries corresponding to $q_{1,2}$ or $q_{3,2}$ for $1$-entries below row $r_2$ contribute $n - r_2$.

Similarly, adding contributions from the $1$-entries above $z_1$ we get:
\[
    E\ge 2n - 2r_2 - 1 + 2r_1 - 3 = 2n - 6.
\]
\end{enumerate}

Thus, the desired bound follows.
\end{proof}

For the $3\times3$ permutation matrix $Q$ corresponding to the permutation $132$, the following result gives the same bound on $\M(n,Q)$ as $\M(n,I_3)$ using the same proof. However, there are subtleties in the proof that need to be taken care of, so for the completeness we include the whole proof below.
\begin{lemma} \label{lm132}
Let $Q$ be the $3\times3$ permutation matrix corresponding to permutation $132$, then
$$ \M(n, Q) \leq n^2 - 3n + 3.$$
\end{lemma}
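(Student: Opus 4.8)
The strategy is to mimic the proof of Lemma~\ref{lm123} essentially verbatim, with $Q$ now the permutation matrix of $132$, i.e.\ $q_{1,1}=q_{2,3}=q_{3,2}=1$. As before, since each row and column of $Q$ has exactly two $0$-entries, every row and column of $A$ has at least two $0$-entries, and if every column has at least three then $|A|\le n^2-3n$ and we are done. So fix a column $c$ with exactly two $0$-entries $z_1=a_{r_1,c}$, $z_2=a_{r_2,c}$, $r_1<r_2$, define the quantity $E$ of ``extra'' $0$-entries (those past the first two in a row or column, counted with multiplicity if extra in both), and note that double-counting the $0$-entries of $A$ row-by-row and column-by-column gives
\[
    \M(n,Q)\le n^2-\Bigl(2n+\tfrac12\min E\Bigr),
\]
so it suffices to show $E\ge 2(n-3)$ in every case.

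\textbf{Key steps.} First handle the case $r_1+1<r_2$. For each $1$-entry $a_{r,c}$ strictly between $z_1$ and $z_2$, the entry $z_1$ can play the role of which $0$ of $Q$ in a copy witnessing $a_{r,c}$? Here is the subtlety flagged by the authors: with permutation $132$ the column of $Q$ containing two $0$-entries is the \emph{first} column (rows $2,3$), so $z_1$ must correspond to $q_{2,1}$ or $q_{3,1}$, and a middle $1$-entry $a_{r,c}$ with $z_1$ above and $z_2$ below is forced to sit at a position whose copy of $Q$ requires further $0$-entries either in a repeated column (giving an extra $0$ in that column) or in row $r_2$ (an extra $0$ there) --- exactly as in Lemma~\ref{lm123}, one tracks the auxiliary columns $d_r$ and the set $\mathcal D$, and the $1$-entries below $z_2$ and above $z_1$, obtaining contributions $n-r_2$, $n-r_2$, $r_2-r_1-2$ from below and the symmetric $r_1-1$, $r_1-1$, $r_2-r_1-2$ from the right of column $c$ (the roles of ``left/right'' and ``above/below'' for the two off-corner $1$'s $q_{2,3},q_{3,2}$ of $132$ swap relative to $I_3$, which is precisely the bookkeeping change one must verify). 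Summing yields $E\ge 2(n-3)$ and hence $\M(n,Q)\le n^2-3n+3$.

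\textbf{Second case and obstacle.} For $r_1+1=r_2$: if $z_1,z_2$ are at the top or bottom of column $c$ the first-case argument still gives $E\ge 2(n-3)$; otherwise the $1$-entry just above $z_1$ forces a further $0$ (corresponding to the remaining required $0$ of $Q$ in that row), and adding the contributions from $1$-entries below $z_2$ and above $z_1$ gives $E\ge 2n-6$. The main obstacle --- and the reason a full reproof rather than a one-line ``same argument'' is warranted --- is making sure that for the permutation $132$ every appeal to ``the $0$-entry of $Q$ to the right/left/above/below'' points to an actual $0$-entry of $Q$ and that the two doubly-off-diagonal $1$'s $q_{2,3}$ and $q_{3,2}$ are assigned consistently (so that no $0$-entry of $A$ is charged to $E$ twice when it should be once, or vice versa); once that correspondence is pinned down, the counting is identical to Lemma~\ref{lm123}.
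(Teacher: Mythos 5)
Your overall framework is the same as the paper's: define the extra-zero count $E$, double count zeros to get $\M(n,Q)\le n^2-(2n+\tfrac12\min E)$, split into the cases $r_1+1<r_2$ and $r_1+1=r_2$, and show $E\ge 2(n-3)$ in each. However, the one place where you make the bookkeeping concrete is wrong. For a $1$-entry $a_{r,c}$ with $r_1<r<r_2$, any copy of $Q$ witnessing it must use column $c$ as one of its columns, and the two zero rows of that column inside the copy can only be $r_1$ and $r_2$; hence the copy's rows are $r_1,r,r_2$ and $a_{r,c}$ occupies the \emph{middle} row of $Q$. Since the $1$-entry of row $2$ of the $132$-matrix is $q_{2,3}$, column $c$ must play the role of column $3$ of $Q$, so $z_1$ corresponds to $q_{1,3}$ and $z_2$ to $q_{3,3}$ — not to $q_{2,1}$ or $q_{3,1}$ as you assert (that assignment would put the $1$ of the column at $q_{1,1}$, i.e.\ in the top row of the copy, contradicting $r_1<r$; it is the correct assignment only for $1$-entries \emph{above} $z_1$). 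Your preliminary claim that ``the column of $Q$ containing two $0$-entries is the first column'' is also not meaningful, since every column of a $3\times3$ permutation matrix has exactly two $0$-entries.

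This matters because the entire charging scheme — which auxiliary columns ($d_{3,r}$, $f_{3,r}$ for the middle block; $d_{2,r}$, $f_{2,r}$ below $z_2$) one tracks, and whether the forced extra zeros land in row $r_1$, in row $r_2$, or in a repeated column to the left or right of $c$ — depends on that role assignment; in the paper's proof the middle $1$-entries contribute $2(r_2-r_1-2)$ directly (one $r_2-r_1-2$ from each tracked column), the entries below $z_2$ contribute $2(n-r_2)$, and those above $z_1$ contribute $2(r_1-1)$, and the adjacent case $r_1+1=r_2$ requires identifying the forced zeros corresponding to $q_{2,2}$ and $q_{3,3}$ to the right of $z_1$ and $z_2$. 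You explicitly defer this verification (``once that correspondence is pinned down, the counting is identical to Lemma~\ref{lm123}''), but that correspondence is precisely the content the lemma adds beyond Lemma~\ref{lm123}, and the version of it you do state is incorrect. So as written the proposal has a genuine gap: the stated charging would not go through as described, and the remaining details are asserted rather than proved.
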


\begin{proof}
Let $A = [a_{ij}]$ be an $n \times n$ matrix that is strongly $Q$-forcing. Since every row and column of $Q$ contains two $0$-entries, every row and column of $A$ must have at least two $0$-entries.

If every column of $A$ contains at least three $0$-entries, then $A$ has at most $n^2 - 3n$ $1$-entries, and the result follows. Now suppose that some column $c$ of $A$ contains exactly two $0$-entries, denoted by $z_1 = a_{r_1,c}$ and $z_2 = a_{r_2,c}$ with $r_1 < r_2$.

Let $E$ be the total number of \emph{extra $0$-entries}, i.e., $0$-entries beyond the first two occurring in any row or column, in every column and row. Note that if a zero is an extra zero in both its column and row, then we count it twice. Denote by $\min E$ a lower bound on $E$. Counting all $0$-entries in $A$ twice: row by row followed by column by column, we see that $A$ has at least $(2n+2n+\min E)/2$ $0$-entries. Thus we have
\[
    \M(n, Q) \leq n^2 - \left(2n + \frac{1}{2} \min E\right).
\]

We consider the following two cases:
\begin{enumerate}

\item[$\mathbf{(1)}$] $r_1 + 1 < r_2$

Examine the $1$-entries between $z_1$ and $z_2$, starting from $a_{r_1 + 1, c}$. For each $r \in [r_1 + 1, r_2 - 1]$, let $d_{3,r}$ and $f_{3,r}$ denote the columns corresponding to the first column and the second column of $Q$ respectively, assuming $a_{r,c}$ corresponds to $q_{2,3}$.

For each $r \in [r_1 + 2, r_2 - 1]$:

If $d_{3,r} \in \{d_{3,s} : s \in [r_1 + 1, r - 1]\}$, then column $d_{3,r}$ has an extra zero at $a_{r, d_r}$. Otherwise, row $r_2$ has an extra zero at $a_{r_2, d_r}$. These contribute a total of $r_2 - r_1 - 2$ to $E$.

Similarly, If $f_{3,r} \in \{f_{3,s} : s \in [r_1 + 1, r - 1]\}$, then column $f_{3,r}$ has an extra zero at $a_{r, f_{3,r}}$. Otherwise, row $r_1$ has an extra zero at $a_{r_1, f_{3,r}}$. These contribute a total of $r_2 - r_1 - 2$ to $E$. Note that there are forced $0$-entries to the left of $z_1$ and $z_2$ in rows $r_1$ and $r_2$ corresponding to $q_{1,2}$ and $q_{3,1}$, respectively.

Let $\mathcal{D}_3 = \{d_{3,r} : r \in [r_1 + 1, r_2 - 1]\}$. Next, examine the $1$-entries below $z_2$ in column $c$, starting from $a_{r_2 + 1, c}$. For each $r \in [r_2 + 1, n]$, let $d_{2,r}$ and $f_{2,r}$ denote the column corresponding to the first and third columns of $Q$, respectively, assuming $a_{r,c}$ corresponds to $q_{3,2}$.

Then:

If $d_{2,r} \in \mathcal{D}_3 \cup \{d_{2,s} : s \in [r_2 + 1, r - 1]\}$, then column $d_{2,r}$ has an extra zero at $a_{r, d_{r,2}}$. Otherwise, row $r_2$ has an extra zero at $a_{r_2, d_{2,r}}$. These contribute $n - r_2$ to $E$.

A similar argument applies for the $0$-entries corresponding to $q_{1,3}$ or $q_{3,3}$, contributing another $n - r_2$.

Similar arguments apply for the $1$-entries above $z_1$ in the column $c$, contributing $2(r_1-1)$ to $E$.

Summing all contributions:
\[
    E \ge 2(n - r_2) + 2(r_1 - 1) + 2(r_2 - r_1 - 2) = 2(n - 3),
\]
and therefore:
\[
    \M(n, Q) \le n^2 - 2n - (n - 3) = n^2 - 3n + 3.
\]

\item[$\mathbf{(2)}$] $r_1 + 1 = r_2$

If $z_1$ and $z_2$ are located at the top or bottom of column $c$, the same argument as above applies, yielding $E \ge 2(n - 3)$.

Suppose instead that $z_1$ and $z_2$ are not at the top or bottom. Then the one at $a_{r_1 - 1, c}$ (just above $z_1$) forces a zero to the right of $z_1$ corresponding to $q_{2,2}$ and a zero to the right of $z_2$ corresponding to $q_{3,3}$.
Therefore $0$-entries corresponding to $q_{2,1}$ or $q_{3,1}$ for $1$-entries below row $r_2$ contribute $n - r_2$ to $E$. $0$-entries corresponding to $q_{1,3}$ or $q_{3,3}$ for $1$-entries below row $r_2+1$ contribute $n - r_2 - 1$.

Similarly, adding contributions from the $1$-entries above $z_1$ we get:
\[
    E\ge 2n - 2r_2 - 1 + 2r_1 - 3 = 2n - 6.
\]
\end{enumerate}

Thus, the desired bound follows.
\end{proof}

It could be seen that certain operations on any matrix $Q$ do not affect $\M(n,Q)$.
We state the following result without proof.
\begin{lemma}\label{lemma:symmetry}
Let $Q$ be an $s\times t$ $(0,1)$-matrix, and let $Q'$ be obtained from $Q$ by performing any sequence of 
horizontal reflection (row reversal), vertical reflection (column reversal), and transposition.
Then for every $n$,
\[
    \mathrm{M}(n,Q') = \mathrm{M}(n,Q).
\]
\end{lemma}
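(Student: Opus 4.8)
The plan is to reduce the statement to its three generating operations and, for each, exhibit a weight-preserving bijection between the relevant families of matrices. Since $Q'$ is obtained from $Q$ by a finite sequence of horizontal reflections (row reversals), vertical reflections (column reversals), and transpositions, it suffices to prove that $\mathrm{M}(n,\rho(Q))=\mathrm{M}(n,Q)$ whenever $\rho$ is a single one of these three operations; the general equality then follows by composing the equalities obtained at each step of the sequence.

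Fix such an operation $\rho$ and let $\hat\rho$ denote the same operation applied to an $n\times n$ ambient matrix: transposition sends $A=[a_{ij}]$ to $[a_{ji}]$, row reversal sends $A$ to $[a_{n+1-i,\,j}]$, and column reversal sends $A$ to $[a_{i,\,n+1-j}]$. In every case $\hat\rho$ is an involution on the set of $n\times n$ $(0,1)$-matrices and it preserves the total number of $1$-entries. The core claim is that $\hat\rho$ restricts to a bijection from the strongly $Q$-forcing $n\times n$ matrices onto the strongly $\rho(Q)$-forcing $n\times n$ matrices.

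To prove the claim I would track how $\hat\rho$ acts on a choice of rows $i_1<\cdots<i_s$ and columns $j_1<\cdots<j_t$ of $A$. Under $\hat\rho$ this data becomes a choice of rows and columns of $\hat\rho(A)$: for transposition the two index sets swap roles; for row reversal the row indices become $n+1-i_s<\cdots<n+1-i_1$ with the column indices unchanged; for column reversal, symmetrically. In each case the resulting index sets are again strictly increasing, hence genuinely index a submatrix, and the submatrix of $\hat\rho(A)$ they determine is exactly $\rho$ applied to the submatrix of $A$ determined by the original indices. Moreover, if an entry $(i_p,j_q)$ lies inside the old submatrix, its image under $\hat\rho$ lies inside the new one. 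Consequently a $1$-entry $o$ of $A$ belongs to an $s\times t$ submatrix \emph{exactly equal} to $Q$ if and only if the corresponding $1$-entry of $\hat\rho(A)$ belongs to a submatrix \emph{exactly equal} to $\rho(Q)$. Ranging over all $1$-entries, $A$ is strongly $Q$-forcing if and only if $\hat\rho(A)$ is strongly $\rho(Q)$-forcing. Since $\hat\rho$ is a weight-preserving bijection, the two families have the same maximum number of $1$-entries, giving $\mathrm{M}(n,\rho(Q))=\mathrm{M}(n,Q)$ and hence the lemma.

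I do not expect a genuine obstacle here; the only point needing care is the bookkeeping of how $\hat\rho$ acts on row and column index sets—specifically, verifying that strictly increasing index sets are sent to strictly increasing index sets, and that the stronger condition ``\emph{exactly equal} to $Q$'' (rather than merely ``contains $Q$'') is preserved, since reversing the rows of the ambient matrix genuinely reverses the rows of every submatrix it cuts out. One may optionally note that the group generated by the three operations is the dihedral group of order $8$ acting on the square, so that each $Q$ determines at most eight patterns with a common value of $\mathrm{M}(n,\cdot)$, but this observation is not needed for the argument.
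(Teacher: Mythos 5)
Your argument is correct and matches the paper's intended proof: the paper states this lemma without proof, but its (commented-out) argument is exactly your reduction to the three generating operations, each extended to the $n\times n$ ambient matrix as a weight-preserving bijection carrying strongly $Q$-forcing matrices to strongly $\rho(Q)$-forcing matrices. Your extra care about strictly increasing index sets and preservation of ``exactly equal to $Q$'' is a sound (and slightly more detailed) write-up of the same idea.
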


\medskip

Let $Q$ be an $m\times m$ permutation matrix. If $Q'$ can be obtained by applying any combination of the operations in Lemma~\ref{lemma:symmetry}, then $Q$ and $Q'$ lie in the same \emph{dihedral symmetry class}. Denote this class by $\mathfrak{D}$. Then, for all $n$, Lemma~\ref{lemma:symmetry} implies
\[
    \mathrm{M}(n,Q_1) = \mathrm{M}(n,Q_2) \quad \text{for every } Q_1,Q_2 \in \mathfrak{D}.
\]

Let $I_3, H_3, B_3, C_3, D_3, E_3$ denote the $3\times 3$ permutation matrices corresponding to the permutations $123$, $321$, $132$, $213$, $231$, and $312$, respectively. These six matrices fall into two dihedral symmetry classes: $I_3$ and $H_3$ form one class, while $B_3, C_3, D_3, E_3$ form the other. Combining Lemma~\ref{lm123}, Lemma~\ref{lm132}, and Lemma~\ref{lemma:symmetry} yields the following.

\begin{corollary}\label{cor:3x3-sym}
\[
    \M(n,I_3) = \M(n,H_3)
    \quad\text{and}\quad
    \M(n,B_3) = \M(n,C_3) = \M(n,D_3) = \M(n,E_3).
\]
\end{corollary}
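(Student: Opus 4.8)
The plan is to obtain the corollary entirely from the orbit structure of the six $3\times 3$ permutation matrices under the symmetry group and then quote Lemma~\ref{lemma:symmetry}. First I would record how the three generating operations (horizontal reflection, vertical reflection, transposition) act on a permutation matrix. Writing $P_\sigma$ for the permutation matrix of $\sigma\in S_3$ and $w_0=321$ for the order-reversing permutation, transposition sends $P_\sigma$ to $P_{\sigma^{-1}}$, row reversal sends $P_\sigma$ to $P_{\sigma w_0}$, and column reversal sends $P_\sigma$ to $P_{w_0\sigma}$ (the sides on which $w_0$ multiplies may swap under the opposite matrix convention, but this does not affect the orbits). Thus the dihedral symmetry class of $P_\sigma$ corresponds to the orbit of $\sigma$ in $S_3$ under inversion, left multiplication by $w_0$, and right multiplication by $w_0$, which is a purely finite computation.

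Next I would carry out that computation. The identity $e$, corresponding to $I_3$, satisfies $e^{-1}=e$ and $w_0 e=ew_0=w_0$, while $w_0^{-1}=w_0$, so $\{e,w_0\}$ is closed, i.e.\ $\{I_3,H_3\}$ is one dihedral symmetry class. Because the dihedral symmetry classes partition the set of all $3\times3$ permutation matrices, the remaining four matrices $B_3,C_3,D_3,E_3$ (permutations $132,213,231,312$) must constitute a single class; to make this concrete I would exhibit short chains, for instance row reversal carries $132$ to $231$, column reversal carries $132$ to $312$, and column reversal carries $231$ to $213$, so all four are connected. Hence there are exactly the two claimed classes.

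Finally, Lemma~\ref{lemma:symmetry} asserts that $\M(n,\cdot)$ is constant on each dihedral symmetry class, which immediately gives $\M(n,I_3)=\M(n,H_3)$ and $\M(n,B_3)=\M(n,C_3)=\M(n,D_3)=\M(n,E_3)$ for every $n$. The roles of Lemma~\ref{lm123} and Lemma~\ref{lm132} in the "combining" step are then to supply the common numerical bound: together with the orbit structure just established they yield $\M(n,P)\le n^2-3n+3$ for every $3\times3$ permutation matrix $P$, since one representative of each class is bounded by $n^2-3n+3$.

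I do not expect a genuine obstacle here—the argument is a finite orbit check plus one invocation of an earlier lemma. The only points demanding care are stating the correspondence between the three matrix symmetries and the permutation operations correctly (so that the generated group really is the full order-$8$ dihedral group used to define dihedral symmetry classes), and verifying that the two orbits I display actually exhaust all six matrices, so that no third class can slip in; the partition argument handles the latter once $\{I_3,H_3\}$ is shown to be closed.
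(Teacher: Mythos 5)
Your proposal is correct and follows essentially the same route as the paper: the paper simply asserts that $\{I_3,H_3\}$ and $\{B_3,C_3,D_3,E_3\}$ are the two dihedral symmetry classes and invokes Lemma~\ref{lemma:symmetry}, which is exactly your orbit computation made explicit (and you are right that Lemmas~\ref{lm123} and~\ref{lm132} are only needed later, for the numerical bound in Theorem~\ref{3x3}, not for the corollary itself).
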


\begin{theorem} \label{3x3}
Let $P$ be a $3\times 3$ permutation matrix. Then
\[
    \M(n,P) = n^2 - 3n + 3.
\]
\end{theorem}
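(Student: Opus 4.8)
We may assume $n\ge 3$. The plan is to pin $\M(n,P)$ between $n^2-3n+3$ from above and below. The structural shortcut is that every $3\times 3$ permutation matrix lies in one of only two dihedral symmetry classes — that of $I_3$ (which also contains $H_3$) and that of $B_3$ (which also contains $C_3,D_3,E_3$) — and by Corollary~\ref{cor:3x3-sym} (ultimately Lemma~\ref{lemma:symmetry}) the function $\M(n,\cdot)$ is constant on each class. So it suffices to establish both bounds for one representative of each class, say $I_3$ for the first and $B_3$ (upper bound) together with $C_3$ (lower bound) for the second.

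\textbf{Upper bound.} Lemma~\ref{lm123} gives $\M(n,I_3)\le n^2-3n+3$, and Lemma~\ref{lm132} gives $\M(n,B_3)\le n^2-3n+3$. Since $I_3$ and $B_3$ meet both symmetry classes, Corollary~\ref{cor:3x3-sym} upgrades these to $\M(n,P)\le n^2-3n+3$ for every $3\times 3$ permutation matrix $P$.

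\textbf{Lower bound.} Here I would use a block-diagonal construction. The key observation is that each of $I_3$ and $C_3$ has the form ``$2\times 2$ permutation matrix $R$, with an extra $1$ appended at the bottom-right corner'': $R=I_2$ for $I_3$, and $R=H_2$ for $C_3$. Given such a $P$, form the $n\times n$ block-diagonal matrix $A$ whose first diagonal block is the $(n-1)\times(n-1)$ strongly $R$-forcing matrix $A'$ of Lemma~\ref{lemma:MI2} (namely $J_{n-1}-H_{n-1}$ when $R=I_2$ and $J_{n-1}-I_{n-1}$ when $R=H_2$, with $\M(n-1,R)=(n-1)(n-2)$ $1$-entries, hence nonempty for $n\ge 3$) and whose second diagonal block is $[1]$. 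Then $A$ has $(n-1)(n-2)+1=n^2-3n+3$ $1$-entries, and I would check that $A$ is strongly $P$-forcing as follows: any $1$-entry of $A$ inside the $A'$-block lies, within $A'$, in some $2\times 2$ submatrix equal to $R$; adjoining the last row and last column of $A$ to that submatrix gives a $3\times3$ submatrix of $A$ equal to $P$, because the two $R$-entries are correct, the four new entries in the last row/column sit in the off-diagonal zero blocks of $A$, and the corner entry is the appended $1$. The lone $1$-entry at position $(n,n)$ is witnessed the same way, starting from any copy of $R$ in $A'$. (For $P=I_3$ this is exactly Proposition~\ref{pro1} with $n_1=n-1$, $n_2=1$, $k_1=2$, $k_2=1$.) Hence $\M(n,I_3)\ge n^2-3n+3$ and $\M(n,C_3)\ge n^2-3n+3$, and since $I_3,C_3$ represent the two symmetry classes, Corollary~\ref{cor:3x3-sym} gives $\M(n,P)\ge n^2-3n+3$ for all $P$. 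Combining the two bounds yields $\M(n,P)=n^2-3n+3$.

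\textbf{Main obstacle.} The genuinely hard estimates — the upper bounds — are already done in Lemmas~\ref{lm123} and~\ref{lm132}, so within this proof the only delicate point is the lower-bound verification: one must confirm that the bordering last row and column of the block matrix supply exactly the zero pattern needed to complete a copy of $R$ in $A'$ into a copy of $R\oplus[1]$. The subtlety to flag is that, although Proposition~\ref{pro1} already covers $I_3$, it does \emph{not} cover $C_3$, so the direct block argument with $R=H_2$ is genuinely needed to close out the second symmetry class.
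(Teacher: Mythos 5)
Your proposal is correct and follows essentially the same route as the paper: the upper bound comes from Lemmas~\ref{lm123} and~\ref{lm132} combined with the symmetry Corollary~\ref{cor:3x3-sym}, and the lower bound comes from a direct-sum construction of a $2\times2$-extremal block with a single $1$, which is the paper's $S_n=1\oplus(J_{n-1}-H_{n-1})$ and $T_n=1\oplus(J_{n-1}-I_{n-1})$ up to a $180^\circ$ rotation (itself one of the allowed symmetries). Your explicit verification that the bordering row and column complete each copy of $R$ into a copy of $R\oplus[1]$ is exactly the "straightforward to verify" step the paper leaves implicit, so there is no gap.
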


\begin{proof}
The upper bound 
follows from Lemma~\ref{lm123}, Lemma~\ref{lm132}, and Corollary~\ref{cor:3x3-sym}.
For the lower bound, observe that the following $n\times n$ matrices with $n^2-3n+3$ $1$-entries
\[
    S_n = 1 \oplus (J_{n-1} - H_{n-1}) \quad \text{and} \quad
    T_n = 1 \oplus (J_{n-1} - I_{n-1}),
\]
are strongly $I_3$-forcing and strongly $C_3$-forcing, respectively. Thus the statement follows.
\end{proof}

\begin{example}
A $5\times 5$ strongly $I_3$-forcing $(0,1)$-matrix with $5^2-3\cdot 5+3=13$ $1$-entries
\[S_5=\left[\begin{array}{c|c|c|c|c}
  1&0&0&0&0\\ \hline
  0&1&1&1&0\\ \hline
  0&1&1&0&1\\ \hline
  0&1&0&1&1\\ \hline
  0&0&1&1&1
  \end{array}\right]\]

and a $5\times 5$ strongly $C_3$-forcing $(0,1)$-matrix with $13$ $1$-entries
\[S_5=\left[\begin{array}{c|c|c|c|c}
  1&0&0&0&0\\ \hline
  0&0&1&1&1\\ \hline
  0&1&0&1&1\\ \hline
  0&1&1&0&1\\ \hline
  0&1&1&1&0
  \end{array}\right]\]

\end{example}

It is straightforward to verify that
\[
    S_{n,k} = I_{k-2}\oplus \bigl(J_{\,n-k+2} - H_{\,n-k+2}\bigr).
\]
is strongly $I_k$-forcing and contains 
\[
    (n-k+2)^2 - (n-k+2) + (k-2)=n^2-(2k-3)n-(2k-k^2)
\]
$1$-entries, leading to inequality
$$
\M(n,I_k)\ge n^2-(2k-3)n-(2k-k^2).
$$
Moreover, Lemma~\ref{lemma:MI2} and Theorem~\ref{3x3} show that equality holds for $k=2$ and $k=3$, respectively.
This motivates the following conjecture.

\begin{conjecture}\label{conj1}
Let $I_k$ be the $k\times k$ identity matrix. If $n\ge k\geq 3$, then
\[
    \M(n,I_k)=n^2-(2k-3)n-(2k-k^2).
\]
\end{conjecture}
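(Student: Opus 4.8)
The lower bound is already in hand: the matrix $S_{n,k}=I_{k-2}\oplus(J_{n-k+2}-H_{n-k+2})$ is strongly $I_k$-forcing and carries $n^2-(2k-3)n-(2k-k^2)$ $1$-entries. So the entire content of the conjecture is the matching upper bound $\M(n,I_k)\le n^2-(2k-3)n-(2k-k^2)$, i.e.\ that every $n\times n$ strongly $I_k$-forcing matrix $A$ has at least $(2k-3)n-k^2+2k$ zero entries. Exactly as in the proofs of Lemma~\ref{lm123} and Lemma~\ref{lm132}, every row and every column of $A$ has at least $k-1$ zeros, so $A$ has at least $(k-1)n$ zeros for free; writing $E$ for the total number of \emph{excess} zeros (zeros counted beyond the first $k-1$ in their row, plus beyond the first $k-1$ in their column) one has $Z=(k-1)n+E/2$ for the zero count $Z$, and the target reduces to the single inequality $E\ge 2(k-2)(n-k)$. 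For $k=2,3$ this is precisely what was proved, and the plan is to push the same accounting through for general $k$.

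The engine is the following rigidity observation. If some column $c$ of $A$ has exactly $k-1$ zeros, at rows $r_1<\dots<r_{k-1}$, then for \emph{every} $1$-entry $a_{r,c}$ in column $c$, any witnessing copy of $I_k$ must use precisely the row set $\{r_1,\dots,r_{k-1},r\}$: inside the copy the $k-1$ off-diagonal entries of column $c$ are zeros of column $c$, and there are exactly $k-1$ of them. Thus the row set of every copy meeting column $c$ is pinned down, and the only freedom is the choice of the other $k-1$ columns; in each such column the copy forces one $1$ and $k-1$ zeros in the prescribed rows $r_1,\dots,r_{k-1},r$. One then walks down column $c$ through its $n-k+1$ $1$-entries and, for each, charges the zeros it forces in the other columns of its copy: a forced zero lying in a previously visited column is an excess zero of that column, while a forced zero in a fresh column lies in one of the $k-1$ rows $r_1,\dots,r_{k-1}$, where after its first $k-1$ hits it becomes an excess zero of that row. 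Carrying this out separately above $r_1$, between consecutive $r_i$'s, and below $r_{k-1}$, and separately for columns left and right of $c$ --- together with the usual separate treatment of the boundary case in which two of the $r_i$ are consecutive, where one uses the $1$-entry just outside the pair --- should, on the model of the $k=3$ computation, accumulate the required $2(k-2)(n-k)$ excess zeros.

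I expect two places to be genuinely harder than for $k=3$. First, for $k\ge 4$ the clean dichotomy used there --- every column has at least $k$ zeros (done immediately) or some column has exactly $k-1$ (do the walk) --- no longer closes the argument, since $kn$ zeros fall short of $(2k-3)n-k^2+2k$ once $n$ is large; one must instead handle columns whose zero count is anywhere between $k-1$ and $2k-4$, balancing the excess they contribute directly against the weaker yield of the walk, and quite possibly alternating the argument between rows and columns. Second, for general $k$ a $1$-entry of column $c$ sits at a diagonal slot $p$ that varies with where its row falls among $r_1<\dots<r_{k-1}$, and it forces $p-1$ zeros on one side and $k-p$ on the other, spread over $k-1$ distinct columns of the copy; keeping the charging injective across all $k-1$ levels at once --- not charging the same cell twice when two $1$-entries of column $c$ use overlapping column sets, and correctly absorbing columns that are themselves sparse --- is the real combinatorial difficulty, and is why the statement is only a conjecture. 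An alternative to the whole scheme would be to prove that the recurrence of Proposition~\ref{pro1} is tight: one checks (by induction on $k$, with base cases Lemma~\ref{lemma:MI2} and Theorem~\ref{3x3}) that its optimum equals $n^2-(2k-3)n-(2k-k^2)$, attained by the branch $n_1=n-1,\ k_1=k-1,\ n_2=k_2=1$, so the conjecture is equivalent to showing that some extremal strongly $I_k$-forcing matrix decomposes as $[1]\oplus A'$ with $A'$ extremal strongly $I_{k-1}$-forcing; but proving that such a decomposition is forced on an extremal matrix looks no easier than the direct count.
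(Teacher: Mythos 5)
This statement is a conjecture in the paper: the authors prove only the lower bound $\M(n,I_k)\ge n^2-(2k-3)n-(2k-k^2)$, via the construction $S_{n,k}=I_{k-2}\oplus(J_{n-k+2}-H_{n-k+2})$, and the general upper bound they establish is merely $\M(n,P)\le n^2-(k-1)n$. Your proposal reproduces exactly this state of affairs rather than improving on it. The lower-bound half of your argument coincides with the paper's construction and is fine, but the upper bound is the entire content of the conjecture, and what you offer for it is a plan, not a proof. You say so yourself, and the two obstructions you name are genuine and fatal to the sketch as written: (i) for $k\ge 4$ the dichotomy from Lemmas~\ref{lm123} and~\ref{lm132} (``every column has at least $k$ zeros, or some column has exactly $k-1$'') no longer suffices, because $kn$ zeros do not reach the target $(2k-3)n-k^2+2k$ when $k\ge 4$ and $n$ is large, so the intermediate regimes of columns with between $k-1$ and $2k-4$ zeros must be handled, and no mechanism for doing so is given; (ii) the charging scheme in the walk is not shown to be injective --- when two $1$-entries of column $c$ use overlapping column sets, or when the ``fresh'' columns are themselves sparse, the same forced zero can be charged twice, and the claimed accumulation of $2(k-2)(n-k)$ excess zeros is never verified. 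The rigidity observation (that a column with exactly $k-1$ zeros pins down the row set of every witnessing copy of $I_k$ through it) is correct and is a reasonable starting point, but it does not by itself close either gap.

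The alternative route you mention --- proving tightness of the recurrence in Proposition~\ref{pro1} and deducing that an extremal matrix splits as $[1]\oplus A'$ --- is also only an equivalence, not a proof: Proposition~\ref{pro1} gives a lower bound on $\M(n,I_k)$, so its optimum being $n^2-(2k-3)n-(2k-k^2)$ says nothing about the upper bound unless you separately show the direct-sum decomposition is forced, which you concede you cannot. In short, there is no complete argument here for the upper bound, which is precisely why the statement remains a conjecture in the paper; your write-up is an honest research program, but it does not prove the statement.
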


The best we have is the following simple bound.
\begin{lemma}
Let $n\ge k$. For every $k\times k$ permutation matrix $P$, we have $\M(n,P)\le n^2-(k-1)n$.

\end{lemma}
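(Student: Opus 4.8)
The plan is to argue that a strongly $P$-forcing $n\times n$ matrix must have at least $k-1$ zeros in every row (equivalently, in every column), which immediately yields at least $(k-1)n$ zeros in total and hence $\M(n,P)\le n^2-(k-1)n$. The starting observation is that $P$, being a $k\times k$ permutation matrix, has exactly $k-1$ zeros in each of its rows and each of its columns. So if $o$ is a $1$-entry of $A$ lying in row $i$, and $B$ is a $k\times k$ submatrix of $A$ with $B=P$ containing $o$, then the row of $B$ through $o$ contains $k-1$ zeros of $B$, all of which are genuine $0$-entries of $A$ lying in row $i$ of $A$. Thus every row of $A$ that contains a $1$-entry already has at least $k-1$ zeros; and a row with no $1$-entry has $n\ge k-1$ zeros trivially (using $n\ge k$). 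The same reasoning applies columnwise.

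The key steps, in order, are: (1) record that a $k\times k$ permutation matrix has exactly $k-1$ zeros per row and per column; (2) fix an arbitrary row $i$ of $A$; if row $i$ has a $1$-entry $o$, invoke the strongly $P$-forcing property to get a copy $B=P$ of $P$ as a submatrix containing $o$, and observe that the row of $A$ meeting $B$ in the row of $o$ must contain the $k-1$ zeros of that row of $B$; if row $i$ has no $1$-entry then it is all-zero and has $n\ge k>k-1$ zeros; either way row $i$ has at least $k-1$ zeros; (3) sum over all $n$ rows to conclude $A$ has at least $(k-1)n$ zero entries, i.e. at most $n^2-(k-1)n$ ones; (4) since $A$ was an arbitrary strongly $P$-forcing matrix, take the maximum to obtain $\M(n,P)\le n^2-(k-1)n$.

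This argument is essentially immediate and I do not anticipate a genuine obstacle; the only point requiring a moment's care is the edge case of an all-zero row (handled by $n\ge k$) and the bookkeeping that the $k-1$ zeros of the relevant row of $B$ are literally zeros of $A$ rather than entries that were merely ``turned to $0$'' — but since strong forcing demands a submatrix \emph{exactly equal} to $P$, no such turning is permitted and the zeros of $B$ are honest zeros of $A$. One could equally phrase the whole proof via columns, or note that it also follows from counting zeros column-by-column, but the single row count suffices.
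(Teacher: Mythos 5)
Your proof is correct and follows essentially the same approach as the paper's: counting that each row of a strongly $P$-forcing matrix must contain at least $k-1$ zeros, since a copy of $P$ exactly equal to $P$ supplies $k-1$ genuine $0$-entries in the row of any $1$-entry. Your treatment of the all-zero-row edge case and the remark that strong forcing forbids ``turning'' entries are careful additions, but the argument is the same one-line row count given in the paper.
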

\begin{proof}
    Since every row of $P$ has $k-1$ $0$-entries, every row of an $n\times n$ strongly $P$-forcing matrix has at least as many $0$-entries. Hence, the inequality follows.
\end{proof}

\section{Conclusion}

In this paper, we introduced two notions of pattern forcing in $(0,1)$-matrices: $Q$-forcing and strongly $Q$-forcing, and we studied the corresponding extremal functions $\mathrm{m}(m,n,Q)$ and $\mathrm{M}(m,n,Q)$, respectively. For $Q$-forcing matrices, we showed that the unique extremal construction can be determined explicitly by a simple algorithm, and we characterized its structure using corner functions expressed via Young diagrams. These tools yield closed formulas and monotonicity properties, illustrating that the behavior of $\mathrm{m}(m,n,Q)$ is relatively well-behaved.

In contrast, the structure of strongly $Q$-forcing matrices is substantially more complicated. The extremal function $\mathrm{M}(m,n,Q)$ does not exhibit the monotonicity enjoyed by $\mathrm{m}(m,n,Q)$, and local pattern constraints propagate in less predictable ways. Nevertheless, we proved that in general $mn - \mathrm{M}(m,n,Q) = O(m+n)$, and we determined the exact values of $\mathrm{M}(n,Q)$ for every $2\times 2$ and $3\times 3$ permutation matrix. These results reveal rich symmetry phenomena and highlight a sharp separation between forcing and strong forcing.

Motivated by the exact formulas obtained for small patterns, we proposed a conjecture for $\mathrm{M}(n,I_k)$, suggesting a broader structural principle governing the extremal behavior of strongly $Q$-forcing matrices. We hope that further progress on this conjecture will deepen the understanding of how prescribed subpatterns influence the global density of $(0,1)$-matrices.






\bibliographystyle{elsarticle-num} 
\bibliography{pattern_bib}

\begin{thebibliography}{10}
\expandafter\ifx\csname url\endcsname\relax
  \def\url#1{\texttt{#1}}\fi
\expandafter\ifx\csname urlprefix\endcsname\relax\def\urlprefix{URL }\fi
\expandafter\ifx\csname href\endcsname\relax
  \def\href#1#2{#2} \def\path#1{#1}\fi

\bibitem{mitchell1992}
J.~S.~B. Mitchell, \href{https://doi.org/10.1007/BF01758836}{{$L_1$} shortest
  paths among polygonal obstacles in the plane}, Algorithmica 8~(1) (1992)
  55--88.
\newblock \href {https://doi.org/10.1007/BF01758836}
  {\path{doi:10.1007/BF01758836}}.
\newline\urlprefix\url{https://doi.org/10.1007/BF01758836}

\bibitem{BG1991}
D.~Bienstock, E.~Gy\H{o}ri, \href{https://doi.org/10.1137/0404002}{An extremal
  problem on sparse {$0$}-{$1$} matrices}, SIAM J. Discrete Math. 4~(1) (1991)
  17--27.
\newblock \href {https://doi.org/10.1137/0404002} {\path{doi:10.1137/0404002}}.
\newline\urlprefix\url{https://doi.org/10.1137/0404002}

\bibitem{EM1959}
P.~Erd\H{o}s, L.~Moser, Problem 11, Canadian Math. Bulletin 2 (1959) 43.

\bibitem{furedi1990maximum}
Z.~F\"{u}redi, \href{https://doi.org/10.1016/0097-3165(90)90074-7}{The maximum
  number of unit distances in a convex {$n$}-gon}, J. Combin. Theory Ser. A
  55~(2) (1990) 316--320.
\newblock \href {https://doi.org/10.1016/0097-3165(90)90074-7}
  {\path{doi:10.1016/0097-3165(90)90074-7}}.
\newline\urlprefix\url{https://doi.org/10.1016/0097-3165(90)90074-7}

\bibitem{SW}
R.~P. Stanley, H.~S. Wilf, An upper bound for the number of permutations
  avoiding a given pattern, Journal of Combinatorial Theory, Series A 40~(1)
  (1985) 132--138.

\bibitem{klazar2000}
M.~Klazar, The {F}\"{u}redi-{H}ajnal conjecture implies the {S}tanley-{W}ilf
  conjecture, in: Formal power series and algebraic combinatorics ({M}oscow,
  2000), Springer, Berlin, 2000, pp. 250--255.

\bibitem{FH1992}
Z.~F\"{u}redi, P.~Hajnal,
  \href{https://doi.org/10.1016/0012-365X(92)90316-8}{Davenport-{S}chinzel
  theory of matrices}, Discrete Math. 103~(3) (1992) 233--251.
\newblock \href {https://doi.org/10.1016/0012-365X(92)90316-8}
  {\path{doi:10.1016/0012-365X(92)90316-8}}.
\newline\urlprefix\url{https://doi.org/10.1016/0012-365X(92)90316-8}

\bibitem{MT2003}
A.~Marcus, G.~Tardos,
  \href{https://doi.org/10.1016/j.jcta.2004.04.002}{Excluded permutation
  matrices and the {S}tanley-{W}ilf conjecture}, J. Combin. Theory Ser. A
  107~(1) (2004) 153--160.
\newblock \href {https://doi.org/10.1016/j.jcta.2004.04.002}
  {\path{doi:10.1016/j.jcta.2004.04.002}}.
\newline\urlprefix\url{https://doi.org/10.1016/j.jcta.2004.04.002}

\bibitem{Tardos2005}
G.~Tardos, \href{https://doi.org/10.1016/j.jcta.2004.11.015}{On 0-1 matrices
  and small excluded submatrices}, J. Combin. Theory Ser. A 111~(2) (2005)
  266--288.
\newblock \href {https://doi.org/10.1016/j.jcta.2004.11.015}
  {\path{doi:10.1016/j.jcta.2004.11.015}}.
\newline\urlprefix\url{https://doi.org/10.1016/j.jcta.2004.11.015}

\bibitem{Keszegh2009}
B.~Keszegh, \href{https://doi.org/10.1016/j.jcta.2008.05.006}{On linear
  forbidden submatrices}, J. Combin. Theory Ser. A 116~(1) (2009) 232--241.
\newblock \href {https://doi.org/10.1016/j.jcta.2008.05.006}
  {\path{doi:10.1016/j.jcta.2008.05.006}}.
\newline\urlprefix\url{https://doi.org/10.1016/j.jcta.2008.05.006}

\bibitem{Pettie2011a}
S.~Pettie, \href{https://doi.org/10.1016/j.disc.2011.06.020}{Degrees of
  nonlinearity in forbidden 0-1 matrix problems}, Discrete Math. 311~(21)
  (2011) 2396--2410.
\newblock \href {https://doi.org/10.1016/j.disc.2011.06.020}
  {\path{doi:10.1016/j.disc.2011.06.020}}.
\newline\urlprefix\url{https://doi.org/10.1016/j.disc.2011.06.020}

\bibitem{KM2007}
M.~Klazar, A.~Marcus,
  \href{https://doi.org/10.1016/j.aam.2006.05.002}{Extensions of the linear
  bound in the {F}\"{u}redi-{H}ajnal conjecture}, Adv. in Appl. Math. 38~(2)
  (2007) 258--266.
\newblock \href {https://doi.org/10.1016/j.aam.2006.05.002}
  {\path{doi:10.1016/j.aam.2006.05.002}}.
\newline\urlprefix\url{https://doi.org/10.1016/j.aam.2006.05.002}

\bibitem{GT2017}
J.~Geneson, P.~M. Tian,
  \href{https://doi.org/10.1016/j.disc.2017.08.017}{Extremal functions of
  forbidden multidimensional matrices}, Discrete Math. 340~(12) (2017)
  2769--2781.
\newblock \href {https://doi.org/10.1016/j.disc.2017.08.017}
  {\path{doi:10.1016/j.disc.2017.08.017}}.
\newline\urlprefix\url{https://doi.org/10.1016/j.disc.2017.08.017}

\bibitem{Geneson2019}
J.~Geneson, Forbidden formations in multidimensional 0-1 matrices, Eur. J.
  Comb. 78 (2019) 147--154.

\bibitem{GT2025}
J.~Geneson, S.-F. Tsai,
  \href{https://www.sciencedirect.com/science/article/pii/S0012365X24004345}{Extremal
  bounds for pattern avoidance in multidimensional 0-1 matrices}, Discrete
  Mathematics 348~(2) (2025) 114303.
\newblock \href {https://doi.org/https://doi.org/10.1016/j.disc.2024.114303}
  {\path{doi:https://doi.org/10.1016/j.disc.2024.114303}}.
\newline\urlprefix\url{https://www.sciencedirect.com/science/article/pii/S0012365X24004345}

\bibitem{Geneson2021b}
J.~Geneson, A generalization of the {K}\"{o}v\'{a}ri-{S}\'{o}s-{T}ur\'{a}n
  theorem, Integers 21 (2021) 1--12.

\bibitem{BC2023}
R.~Brualdi, L.~Cao, 123-forcing matrices, Australasian Journal of Combinatorics
  86~(1) (2023) 169--186.

\end{thebibliography}






\end{document}